\numberwithin{equation}{section}
\newtheorem{theorem}[equation]{Theorem}
\newtheorem{proposition}[equation]{Proposition}
\newtheorem{lemma}[equation]{Lemma}
\newtheorem{remark}[equation]{Remark}
\begin{document}
\title{On Multi-linear Maximal Operators Along Homogeneous Curves}

\author{Lars Becker}
\address{Mathematical Institute, University of Bonn, 
Endenicher Allee 60, 53115,
Bonn, Germany}
\email{becker@math.uni-bonn.de}

\author{Ben Krause}
\address{Department of Mathematics, University of Bristol, BS8 1QU, UK}
\email{ben.krause@bristol.ac.uk}

\date{\today}

\begin{abstract}
Suppose that
\[ \vec{\gamma}(t) := (\gamma_1(t),\dots,\gamma_n(t)) = (a_1 t^{d_1},\dots,a_n t^{d_n}), \; \; \; 1\leq d_1 < \dots < d_n, \ a_i \neq 0\]
is a homogeneous polynomial curve.
We prove that whenever $p_1,\dots,p_n > 1$ and $\frac{1}{p} = \sum_{j=1}^n \frac{1}{p_j} \leq 1$, there exists an absolute constant $0 < C = C_{p_1,\dots,p_n;\vec{\gamma}} < \infty$ so that
\[ \| \sup_{r > 0} \ \frac{1}{r} \int_{0}^r \prod_{i=1}^n |f_i(x-\gamma_i(t))| \ dt \|_{L^p(\mathbb{R})} \leq C \cdot \prod_{i=1}^n \| f_j \|_{L^{p_j}(\mathbb{R})}.
\]
%
%
Our main tool is a smoothing estimate, adapted from work of Kosz-Mirek-Peluse-Wright.

\end{abstract}

\maketitle


\section{Introduction}

The study of multi-linear maximal functions dates back to celebrated work of Lacey \cite{LAC}, who proved the following theorem.
\begin{theorem}\label{t:lac}
Suppose that $p_1,p_2 > 1$, and that $\frac{1}{p_1} + \frac{1}{p_2} = \frac{1}{p} < \frac{3}{2}$. Then there exists an absolute constant $0 < C_{p_1,p_2} < \infty$ so that
\begin{align}
    \| B^{\vec{\gamma}}(f_1,f_2) \|_{L^p(\mathbb{R})} &:= \| \sup_{r > 0} \big| \frac{1}{r} \int_0^r f(x-\gamma_1(t)) g(x-\gamma_2(t)) \ dt \big| \|_{L^p(\mathbb{R})} \\
    & \qquad \leq C_{p_1,p_2} \| f_1 \|_{L^{p_1}(\mathbb{R})} \| f_2\|_{L^{p_2}(\mathbb{R})} 
\end{align}
whenever $\gamma_i(t) = a_i t$ and $a_1 \neq a_2$ are non-zero.
\end{theorem}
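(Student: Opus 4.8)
The plan is to reduce the bilinear maximal operator to a linearized, dyadically decomposed model sum and then to control that sum by the time--frequency (phase--plane) machinery that Lacey and Thiele developed for the bilinear Hilbert transform, with the supremum over the radius absorbed as a pointwise choice of scale. Since $\gamma_i(t)=a_it$ with $a_1\neq a_2$ and $a_i\neq 0$, an affine rescaling of $t$ reduces matters to the model operator
\[ B(f_1,f_2)(x)=\sup_{r>0}\Big|\frac1r\int_0^r f_1(x-t)\,f_2(x-\beta t)\,dt\Big|, \qquad \beta\in\RR\setminus\{0,1\}, \]
where the precise value of $\beta$ is immaterial: its only role is to guarantee a non-degenerate frequency configuration, and hence a one-parameter modulation symmetry, which is the true source of the difficulty. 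I would then linearize the supremum by a measurable radius function $x\mapsto r(x)$, reducing the claim to a bound for the sublinear operator $x\mapsto A_{r(x)}(f_1,f_2)(x)$ uniform over all choices of $r(\cdot)$, and hence to a uniform estimate for the trilinear form $\Lambda(f_1,f_2,f_3)=\int A_{r(x)}(f_1,f_2)(x)\,f_3(x)\,dx$. Since $p$ may be smaller than $1$ in the stated range, ordinary duality is unavailable; instead I would prove restricted-type bounds for $\Lambda$, testing the third input against the indicator of a major subset of a prescribed measurable set, and recover the strong-type estimate by multilinear interpolation.

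Next I would separate scales. Each individual smooth-scale average $A_k(f_1,f_2)(x)=\int f_1(x-t)f_2(x-\beta t)\,\phi_k(t)\,dt$, with $\phi_k(t)=2^{-k}\phi(2^{-k}t)$, is bounded from $L^{p_1}\times L^{p_2}$ into $L^p$ directly by Hölder's inequality, uniformly in $k$, so the entire difficulty lies in the supremum over scales. I would split that supremum according to the relative frequency localizations of $f_1$ and $f_2$. In the non-resonant regime, where the two frequency supports are separated, the scales decouple and the supremum is controlled by an $\ell^2$ square function combined with the single-scale bounds; in the resonant regime, where the frequency supports interact along the modulation-invariant direction characteristic of the bilinear Hilbert transform, no absolute summation is available and genuine cancellation must be extracted.

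For the resonant piece I would run phase--plane analysis. I expand each $f_i$ into $L^2$-normalized wave packets $\phi_P$ adapted to tiles $P=I_P\times\omega_P$ of unit area, and rewrite $\Lambda$ as a sum over tri-tiles $\vec P=(P_1,P_2,P_3)$ subject to the frequency condition forcing the three intervals $\omega_{P_1},\omega_{P_2},\omega_{P_3}$ to be pairwise separated and lacunary relative to a common scale. The linearizing radius $r(x)$ is incorporated as an additional $x$-dependent restriction on which scales are active, handled by the maximal (stopping-scale) variant of the wave-packet estimates. I would then organize the tiles into trees, prove single-tree estimates through the size and energy inequalities (with John--Nirenberg/BMO control of the tree sums), bound the number of trees of a given size and energy by an orthogonality and counting argument, and sum the resulting tree estimates geometrically.

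The step I expect to be the main obstacle is precisely the interaction of the supremum with the resonant tree estimates: one must verify that the size and energy inequalities persist under the per-point maximal choice of scale, which forces an upgrade of the scalar tree estimates to a maximal-truncation (or vector-valued) form with uniform control of the associated stopping-time data. The admissible range of exponents, and in particular the restriction $\tfrac1p<\tfrac32$ (equivalently $p>\tfrac23$), then emerges from the tree-counting step and the shape of the restricted-type region: it is the largest range in which the geometric summation over trees converges, and it is sharp for the bilinear maximal function.
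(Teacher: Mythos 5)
This statement is not proved in the paper at all: it is Lacey's theorem, quoted as background and cited to \cite{LAC}, so there is no internal proof to compare your attempt against. Your proposal is, in outline, a reconstruction of the proof in the cited work: reduction to a normalized model form with a single parameter $\beta \notin \{0,1\}$, linearization of the supremum by a measurable radius $r(x)$, restricted-type bounds plus multilinear interpolation to reach $p<1$, and the Lacey--Thiele phase-plane machinery (wave packets, tri-tiles, trees, size/energy) for the modulation-invariant core. This is the right, and essentially the only known, route; in particular the paper's own technique (the smoothing estimates of Propositions \ref{p:sob1} and \ref{p:shifted}) is structurally unavailable here, since, as the authors themselves note, the modulation invariance in Theorem \ref{t:lac} precludes any argument that gains from zero-frequency separation.

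Two caveats on your sketch. First, the step you flag as the main obstacle --- making the size/energy and tree estimates survive the pointwise choice of scale $r(x)$ --- is precisely where Lacey needed a genuinely new ingredient beyond the bilinear Hilbert transform argument: the supremum over scales inside a tree is controlled via Bourgain's entropy method for maximal Fourier projections, not by a routine ``maximal truncation upgrade'' of the scalar tree estimates. As written, your proposal names the difficulty but does not supply the tool that resolves it, so it is an accurate roadmap rather than a complete argument. Second, your closing claim that the restriction $p>\tfrac23$ ``is sharp for the bilinear maximal function'' is not justified: $\tfrac23$ is the boundary of the known range (as for the bilinear Hilbert transform), and boundedness below it is open rather than disproved. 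Neither caveat changes the verdict that your strategy matches the architecture of the cited original proof.
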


The key property of the above operator is its modulation invariance, which necessitated an approach using time-frequency analysis, building off ideas of Lacey-Thiele in their work on the bilinear Hilbert transform \cite{LT1,LT2}; this method was later adapted to handle multi-linear extensions, see \cite{D}.

On the other hand, when the modulation invariance embedded in $\vec{\gamma}$ is eliminated, different techniques can be employed. This was first explored in the singular integral context in \cite{Li0,Lie0}, with subsequent work of \cite{Li} establishing the following; see also \cite{Lie}.

\begin{theorem}\label{t:Li}
    Suppose that $\gamma_1(t) = t, \ \gamma_2(t) = P(t)$, where $P(t)$ is a polynomial of degree $d$ which vanishes to degree $\geq 2$ at the origin. Then whenever $p_1,p_2 >1$, and $\frac{1}{p_1} + \frac{1}{p_2} = \frac{1}{p} < \frac{d}{d-1}$, there exists an absolute constant $0 < C_{p_1,p_2} < \infty$ so that
\[ \| B^{\vec{\gamma}}(f_1,f_2) \|_{L^p(\mathbb{R})} \leq C_{p_1,p_2} \| f_1 \|_{L^{p_1}(\mathbb{R})} \| f_2 \|_{L^{p_2}(\mathbb{R})}.\]
\end{theorem}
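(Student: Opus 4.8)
The plan is to prove the maximal bound for $B^{\vec{\gamma}}$ by extracting a single-scale oscillatory smoothing estimate from the curvature of $P$ and feeding it into a square-function argument that linearizes the supremum over $r$. Write $\gamma_1(t)=t$ and $P(t)=\sum_{j=2}^{d}c_j t^j$ with $c_d\neq0$, fix a bump $\phi$ adapted to $\{|t|\sim1\}$, and set
\[
A_k(f,g)(x):=\int f(x-t)\,g(x-P(t))\,2^{-k}\phi(2^{-k}t)\,dt ,
\]
with $A_r$ the analogous smooth average at continuous scale $r$. First I would reduce the continuous supremum to these smooth dyadic averages. Since $\tfrac1r\mathbf{1}_{[0,r]}$ differs from a smooth bump at the comparable scale by a kernel whose $r$-derivative is integrable, a standard argument dominates $\sup_{r>0}\bigl|\tfrac1r\int_0^r f(x-t)g(x-P(t))\,dt\bigr|$ by $\sup_k|A_k(f,g)(x)|$ together with the short-variation square function $\bigl(\int_0^\infty|r\partial_r A_r(f,g)(x)|^2\,\tfrac{dr}{r}\bigr)^{1/2}$. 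Crucially both objects keep the oscillatory structure of $f$ and $g$ (no absolute values are inserted inside the average), so the frequency-side decay below remains available; this is exactly the point that separates the averaging problem from a naive Hardy--Littlewood comparison.

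Second, the curvature enters through the bilinear multiplier. Writing
\[
A_k(f,g)(x)=\iint\widehat f(\xi)\,\widehat g(\eta)\,e^{2\pi i x(\xi+\eta)}\,m_k(\xi,\eta)\,d\xi\,d\eta,\qquad m_k(\xi,\eta)=\int e^{-2\pi i(\xi t+\eta P(t))}\,2^{-k}\phi(2^{-k}t)\,dt ,
\]
and rescaling $t=2^k s$, the phase becomes $\xi 2^{k}s+\eta P(2^k s)$, whose $d$-th $s$-derivative equals $d!\,c_d\,\eta\,2^{kd}$. Van der Corput's lemma of order $d$ then gives
\[
|m_k(\xi,\eta)|\lesssim\bigl(1+|\eta|\,2^{kd}\bigr)^{-1/d},
\]
with additional rapid decay once $|\xi|2^k$ dominates the $\eta$-part of the phase (non-stationary phase in $t$). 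This is the sole place the hypotheses are used: the degree $d$ fixes the van der Corput exponent $1/d$, while vanishing to order $\ge2$ ensures genuine curvature uniformly across scales. For the monomial $P=c_d t^d$ the estimate is exactly scale invariant; for a general $P$ one splits the scale variable into ranges in which a single monomial $c_j t^j$ dominates and applies van der Corput of the corresponding order $j$, the degree-$d$ range being the worst.

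Third, I would split $A_k$ according to the resonance $|\xi|2^{k}\sim|\eta|2^{kd}$. On the low-frequency part, where $|\xi|2^k\lesssim1$ and $|\eta|2^{kd}\lesssim1$, the symbol $m_k$ is an $O(1)$ smooth bump essentially independent of the frequencies and the averaging kernel is positive and smooth, so this part is dominated by a product of Hardy--Littlewood-type maximal functions; since $\|Mf\cdot Mg\|_p\le\|Mf\|_{p_1}\|Mg\|_{p_2}\lesssim\|f\|_{p_1}\|g\|_{p_2}$ for all $p_1,p_2>1$, it is bounded in the full range $\tfrac1p<2$ and never obstructs. The remainder I would decompose as $\sum_{j\ge1}A_k^{\,j}$, where $A_k^{\,j}$ localizes $|\eta|2^{kd}\sim2^{jd}$, so that van der Corput contributes a single-scale gain $2^{-j}$.

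Finally comes the heart of the matter: the supremum in $k$ of the high-frequency part. For fixed $j$ I would dominate $\sup_k|A_k^{\,j}(f,g)|\le\bigl(\sum_k|A_k^{\,j}(f,g)|^2\bigr)^{1/2}$ and exploit that, as $k$ varies, $A_k^{\,j}$ localizes $\eta$ (and, at resonance, $\xi$) to $k$-dependent dyadic bands, yielding almost-orthogonality in $k$. Interpolating the resulting bilinear square-function estimate between its $L^2$-based form and cruder bounds, and inserting the $2^{-j}$ gain, one should obtain $\|\sup_k|A_k^{\,j}(f,g)|\|_p\lesssim 2^{-cj}\|f\|_{p_1}\|g\|_{p_2}$ with a rate $c=c(p,d)$ that stays positive precisely when $\tfrac1p<\tfrac{d}{d-1}$; summing the geometric series in $j$ then closes the estimate. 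I expect this step to be the main obstacle. First, the curve $\gamma_1(t)=t$ is flat, so the $\xi$-variable supplies no oscillatory decay and must be absorbed through Littlewood--Paley projections and the elementary bound $\|A_k(f,g)\|_1\le\|f\|_2\|g\|_2$ (Cauchy--Schwarz), which couples the two frequencies. Second, away from $L^2$ the bilinear square function in $k$ carries an $L^p$ constant that grows with $j$, and the competition between this growth and the van der Corput gain $2^{-j}$ is exactly what produces the exponent $\tfrac{d}{d-1}$ --- the factor $d-1$ being the scale separation in the resonance relation $|\xi|2^{k}\sim|\eta|2^{kd}$. Controlling this balance sharply, rather than settling for a suboptimal range of $p$, is the crux.
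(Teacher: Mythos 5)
This statement is quoted background in the paper (it is Li--Xiao's theorem, cited as \cite{Li}); the paper gives no proof of it, so your proposal can only be judged on its own terms, and on those terms it has a genuine gap at exactly the step you yourself flag as ``the heart of the matter.'' The gap is this: a pointwise van der Corput bound on the bilinear symbol, $|m_k(\xi,\eta)|\lesssim(1+|\eta|2^{kd})^{-1/d}$, does \emph{not} yield an operator bound with the corresponding gain, because bilinear multiplier operators are not controlled by the sup norm of their symbols --- there is no bilinear Plancherel theorem. (The bilinear Hilbert transform is the cautionary example: its symbol $\sgn(\xi-\eta)$ is bounded, yet the operator bound required Lacey--Thiele time-frequency analysis.) Worse, in the resonant region $|\xi|2^k\sim|\eta|2^{kd}\gg1$ the phase $\xi t+\eta P(t)$ has a stationary point, so the symbol does not simply decay: stationary phase leaves a main term of size $(|\eta|2^{kd})^{-1/2}$ multiplied by an oscillatory factor $e^{i\Phi(\xi,\eta)}$, and it is precisely the non-degenerate coupling of $\xi$ and $\eta$ in $\Phi$ that must be exploited. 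Your proposed mechanism --- dominate $\sup_k$ by an $\ell^2$ square function, invoke almost-orthogonality in $k$, and interpolate --- discards this phase (square functions and orthogonality in $k$ only see the size of the localized pieces), and the crude bound $\|A_k(f,g)\|_1\le\|f\|_2\|g\|_2$ you fall back on carries no gain at all in the resonant region. So the claimed conclusion $\|\sup_k|A_k^{\,j}(f,g)|\|_p\lesssim2^{-cj}\|f\|_{p_1}\|g\|_{p_2}$ is asserted, not derived.

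What fills this hole in the literature is a genuinely bilinear single-scale smoothing (Sobolev) estimate --- stated in the present paper as Proposition \ref{p:sob}, and due to Li--Xiao --- of the form $\|A_k(f,g)\|_{L^1}\lesssim 2^{-cl}\|f\|_2\|g\|_2$ under frequency localization at height $2^{l}$ above the natural scale. Its proof is not a formal consequence of van der Corput plus orthogonality; it requires a bilinear mechanism such as a $TT^*$ argument in the $t$-variable or a $\sigma$-uniformity decomposition, which is where the absence of modulation invariance (the vanishing of $P$ to order $\ge2$) is actually cashed in, and where the exponent governing the range $\frac1p<\frac{d}{d-1}$ is really produced. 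Your outer scaffolding (reduction to smooth dyadic averages plus a variation square function, the low-frequency Hardy--Littlewood domination, the frequency decomposition in $j$, and summing a geometric series after interpolation) is standard and sound; but as written, the proposal presupposes the key lemma rather than proving it, so it does not constitute a proof of the theorem.
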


The key ingredient in establishing Theorem \ref{t:Li} was a \emph{Sobolev estimate}, a representative case of which is stated below.

\begin{proposition}[Special Case]\label{p:sob}
    Suppose that $\gamma_1(t) = t, \ \gamma_2(t) = t^2$, and that $\hat{f_i}$ vanishes outside $\{ |\xi| \leq 2^{l + ik} \}$ for some $i$. Then there exist absolute constants $0 < c  < C < \infty$ so that
    \begin{align}
        \| \int_0^1 f_1(x-\gamma_1(2^{-k} t)) f_2(x - \gamma_2(2^{-k} t)) \ dt \|_{L^1(\mathbb{R})} \leq C 2^{-cl} \| f_1 \|_{L^2(\mathbb{R})} \| f_2 \|_{L^2(\mathbb{R})}.
    \end{align}
\end{proposition}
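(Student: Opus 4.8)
The plan is to pass to the Fourier side, extract the oscillation of the $t$-integral through van der Corput's lemma, and then reassemble via Plancherel. Since we are proving an $L^1$ bound, I would first dualize: fix $g \in L^\infty(\mathbb{R})$ with $\|g\|_{L^\infty} \leq 1$ and estimate the trilinear form
\[ \Lambda(f_1,f_2,g) := \int_{\mathbb{R}} \int_0^1 f_1(x - 2^{-k}t)\, f_2(x - 2^{-2k}t^2)\, g(x)\, dt\, dx. \]
Expanding $f_1, f_2, g$ through Fourier inversion and carrying out the $x$-integration, which produces $\delta(\xi_1 + \xi_2 + \xi_3)$ and forces $\xi_3 = -\xi_1 - \xi_2$, collapses this to
\[ \Lambda(f_1,f_2,g) = \int_{\mathbb{R}^2} \hat{f_1}(\xi_1)\,\hat{f_2}(\xi_2)\,\hat{g}(-\xi_1-\xi_2)\, m_k(\xi_1,\xi_2)\, d\xi_1\, d\xi_2, \]
where the multiplier is the oscillatory integral
\[ m_k(\xi_1,\xi_2) := \int_0^1 e^{-2\pi i (2^{-k}\xi_1 t + 2^{-2k}\xi_2 t^2)}\, dt. \]

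The second step is to estimate $m_k$. Since $\partial_t^2[2^{-k}\xi_1 t + 2^{-2k}\xi_2 t^2] = 2^{1-2k}\xi_2$ is independent of $t$, the van der Corput second-derivative test gives, uniformly in $\xi_1$,
\[ |m_k(\xi_1,\xi_2)| \lesssim \min\{1,\, (2^{-2k}|\xi_2|)^{-1/2}\}. \]
In the driving case $i=2$, the hypothesis localizes $\xi_2$ to the scale $2^{l+2k}$, so that after the parabolic rescaling $2^{-2k}|\xi_2| \sim 2^{l}$ and hence $|m_k| \lesssim 2^{-l/2}$, which is precisely the claimed gain with $c = \tfrac12$. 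The complementary case $i=1$, in which $f_1$ is localized at the low scale $2^{l+k}$, is not governed by the second-derivative test but by the \emph{transversality} of the two frequency variables: I would record the sharper non-stationary estimate obtained by integrating by parts whenever the critical point $t_* = -2^{k}\xi_1/(2\xi_2)$ lies outside $[0,1]$, which handles the region where the linear term of the phase cannot be cancelled by the quadratic one.

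The third step is to turn the pointwise decay of $m_k$ into the operator bound. Inserting the gain and invoking Plancherel, I would pair $g$ against the bilinear output and run a $TT^*$/Schur-type argument: after the factor $2^{-cl}$ is extracted, the remaining kernel in $(\xi_1,\xi_2)$ is integrated against the constrained variable $\xi_3 = -\xi_1-\xi_2$, and one closes using $\|g\|_{L^\infty} \leq 1$ together with $\|\hat{f_i}\|_{L^2} = \|f_i\|_{L^2}$. The main obstacle is exactly this reassembly. The bilinearity means that the decay of $m_k$ by itself is not enough: because of the linear relation among $\xi_1,\xi_2,\xi_3$, a naive Cauchy--Schwarz squanders one of the $L^2$ factors, so one must combine the stationary-phase decay in $\xi_2$ with the transversality of the frequencies to ensure that, after integrating out the constrained variable, what survives is a genuine $L^2 \times L^2$ pairing carrying the full exponential gain. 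The delicate technical point is patching the van der Corput bound to the non-stationary bound across the borderline region where $t_*$ sits near the endpoints of $[0,1]$; this is the junction at which the smoothing estimate of Kosz--Mirek--Peluse--Wright is brought to bear.
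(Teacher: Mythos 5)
Your Steps 1--2 are fine but they are not where the difficulty of this proposition lives, and Step 3 --- the only hard part --- is not a proof. The van der Corput bounds are correct: on the support dictated by the hypothesis one indeed has $|m_k(\xi_1,\xi_2)| \lesssim 2^{-l/2}$ (second-derivative test when the quadratic term dominates or when $i=2$, first-derivative/non-stationary estimate when $i=1$ and the linear term dominates). But pointwise decay of a \emph{bilinear} multiplier does not bound the bilinear operator: there is no Plancherel theorem for the trilinear form $\int \hat{f_1}(\xi_1)\hat{f_2}(\xi_2)\hat{g}(-\xi_1-\xi_2)m_k(\xi_1,\xi_2)\,d\xi_1 d\xi_2$, and a bounded bilinear symbol can have arbitrarily large $L^2\times L^2 \to L^1$ norm. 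Your dualization compounds the problem: for $g \in L^\infty$ the object $\hat{g}$ is only a tempered distribution, and $\|g\|_{L^\infty}\leq 1$ gives no pointwise, $L^1$, or $L^2$ control of $\hat{g}$ whatsoever, so the proposed ``Plancherel + Schur/$TT^*$'' closure has no ingredients to run on. You in fact name this obstruction yourself (``a naive Cauchy--Schwarz squanders one of the $L^2$ factors''), but naming it is all the proposal does; converting the multiplier decay into the operator estimate is precisely the content of the proposition, and in the literature it requires genuinely additional machinery --- in Li and Li--Xiao \cite{Li} it is the $\sigma$-uniformity/stationary-phase argument, which occupies most of that paper and appears nowhere in your outline.

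The final appeal to Kosz--Mirek--Peluse--Wright is a category error. Their Theorem 6.1 is not a device for ``patching the van der Corput bound to the non-stationary bound'' near borderline critical points; it is a complete smoothing estimate, proved by entirely different (multilinear circle method) techniques, for the split-variable averages $A_{-k}(F_1,\dots,F_n)$ on $\mathbb{R}^n$ in which each $F_i$ is translated in its own coordinate. The way it is actually brought to bear here (Lemma \ref{l:key} and the proof of Proposition \ref{p:sob1}) is as a black box: one first removes the $k$-dependence by the anisotropic rescaling $D_\lambda$, and then transfers the $n$-dimensional estimate to the one-dimensional operator $B_k$ by a projection argument --- tensoring $g_i$ along the diagonal via $F_i(x) := \epsilon^{1-1/n}\varphi(\epsilon x)\,g_i(x\cdot\vec{1})$ and letting $\epsilon \downarrow 0$ --- with no Fourier-side multiplier analysis at all. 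So either you carry out the $\sigma$-uniformity argument to finish your Step 3, or you abandon the multiplier route and use \cite{Kosz} the way the paper does; as written, the proposal does neither.
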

In other words, the only obstruction to the estimate
\begin{align}
    \| \int_0^1 f_1(x-\gamma_1(2^{-k} t)) f_2(x - \gamma_2(2^{-k} t)) \ dt \|_{L^1(\mathbb{R})} \ll \| f_1 \|_{L^2(\mathbb{R})} \| f_2 \|_{L^2(\mathbb{R})}
\end{align}
arises from zero-frequency considerations; note that the modulation invariance from Theorem \ref{t:lac} precludes such an argument. Aside from their utility in studying the operators $\{ B^{\vec{\gamma}} \}$, Sobolev estimates have found a wide use in problems in Euclidean Ramsey theory, dating back to the work of Bourgain \cite{B}, with more recent contributions found in e.g.\ \cite{DGR, D+, Hong, K, K0}. The current state of the art for operators of the form $\{ B^{\vec{\gamma}} \}$ is essentially due to Hu-Lie \cite{HL}, who addressed trilinear formulations
\begin{align}
    \vec{\gamma} = (P_1(t),P_2(t),P_3(t))
\end{align}
provided $P_i$ are distinct degree polynomials which vanish at different rates at $0$, see Observation 1.2 (i) of \cite{HL} and \cite[Remark 2]{Lie}; their key input was a trilinear analogue of Proposition \ref{p:sob}, which we state below.

\begin{proposition}[Special Case]\label{p:sob0}
    Suppose that $\gamma_i(t) = t^i, \ 1 \leq i \leq 3$, and that $\hat{f_i}$ vanishes outside $\{ |\xi| \leq 2^{l + ik} \}$ for some $i$. Then there exist absolute constants $0 < c  < C < \infty$ so that
    \begin{align}
        \| \int_0^1 \prod_{i=1}^3 f_i(x-\gamma_i(2^{-k} t))  \ dt \|_{L^1(\mathbb{R})} \leq C 2^{-cl} \prod_{i=1}^3 \| f_i \|_{L^3(\mathbb{R})}.
    \end{align}
\end{proposition}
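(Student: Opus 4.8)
The plan is to recognize the trilinear average as the restriction to the diagonal of a \emph{single} average along the moment curve $\Gamma(s) := (s,s^2,s^3)$ in $\mathbb{R}^3$, and to feed in a frequency-localized smoothing estimate of Kosz--Mirek--Peluse--Wright type. Writing $F := f_1 \otimes f_2 \otimes f_3$ and $A_k F(y) := \int_0^1 F(y - \Gamma(2^{-k}t))\,dt$ for $y \in \mathbb{R}^3$, one has the identity
\[ \int_0^1 \prod_{i=1}^3 f_i(x - \gamma_i(2^{-k}t))\,dt = (A_k F)(x,x,x), \]
so the quantity to be estimated is the $L^1(\mathbb{R})$ norm of the restriction of $A_k F$ to the diagonal $\{x_1=x_2=x_3\}$. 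After a Littlewood--Paley decomposition I would reduce to the resonant piece in which each $\widehat{f_i}$ is localized to the annulus $\{|\xi| \sim 2^{l+ik}\}$; equivalently, in the rescaled frequency variables $\eta_i := 2^{-ik}\zeta_i$ dual to $y$, the symbol $\widehat{F}$ is supported where $\max_i |\eta_i| \sim 2^l$.

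The engine of the argument is the multiplier bound for $A_k$. Its symbol is $m(\zeta) = \int_0^1 e^{-2\pi i(\eta_1 t + \eta_2 t^2 + \eta_3 t^3)}\,dt$ with $\eta_i = 2^{-ik}\zeta_i$, and the phase $\phi(t) = \eta_1 t + \eta_2 t^2 + \eta_3 t^3$ inherits the non-vanishing torsion of the moment curve: whenever $\max_i|\eta_i| \sim 2^l$, at least one derivative $\phi^{(j)}$ with $1 \le j \le 3$ is $\gtrsim 2^l$ on a large portion of $[0,1]$. Van der Corput's lemma then yields $|m(\zeta)| \lesssim 2^{-l/3}$, uniformly in $k$, on the resonant annulus. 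By Plancherel this already gives the frequency-localized bound $\|A_k F\|_{L^2(\mathbb{R}^3)} \lesssim 2^{-l/3}\|F\|_{L^2(\mathbb{R}^3)}$; the point of the KMPW-type estimate I would invoke is the stronger $L^p$-improving version, $\|A_k F\|_{L^q(\mathbb{R}^3)} \lesssim 2^{-cl}\|F\|_{L^p(\mathbb{R}^3)}$ with $q > p$, which encodes not just decay but a gain of regularity.

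It is this gain of regularity that I would use to descend to the diagonal. Restriction to $\{x_1=x_2=x_3\}$ is a trace-type operation, and pairing $A_k F$ against $g(x)\,\delta_{\{x_1=x_2=x_3\}}$ for $\|g\|_\infty \le 1$ shows that pure $L^2(\mathbb{R}^3)$ control is insufficient---the dual object is a measure on the diagonal rather than an $L^2$ function. The improving estimate supplies exactly the extra integrability needed to make this pairing bounded, and combined with the tensor identity $\|F\|_{L^p(\mathbb{R}^3)} = \prod_i \|f_i\|_{L^p(\mathbb{R})}$ it converts the $\mathbb{R}^3$ smoothing bound into $\| (A_kF)|_{\mathrm{diag}} \|_{L^1(\mathbb{R})} \lesssim 2^{-c'l}\prod_i \|f_i\|_{L^3(\mathbb{R})}$. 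Equivalently, at the level of the trilinear form one can prove the three symmetric bounds with two functions in $L^2$ and one in $L^\infty$ and interpolate to the centroid $(\tfrac13,\tfrac13,\tfrac13)$.

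I expect the main obstacle to be precisely the resonant regime together with this diagonal descent. When all three rescaled frequencies are comparable to $2^l$, the oscillation of the $L^\infty$-slot function in $t$ is of the same order as the oscillation driving the bilinear estimate, so one cannot simply iterate the bilinear Proposition \ref{p:sob}; the decay must be extracted from the joint non-degeneracy (torsion) of $(t,t^2,t^3)$ all at once. Quantifying the $L^p$-improving gain with explicit, $k$-uniform decay, and then transferring it across the trace to land in $L^1(\mathbb{R})$ against $L^3$ inputs, is where the real work lies; the low-frequency piece $\max_i|\eta_i| \lesssim 1$ carries no oscillation and is disposed of by the trivial Hölder bound $\|T(\vec f)\|_{L^1} \le \prod_i \|f_i\|_{L^3}$.
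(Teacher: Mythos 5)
Your reduction to $\mathbb{R}^3$ founders at the diagonal-restriction step, and this is a genuine gap, not a technicality. Writing $\int_0^1\prod_{i}f_i(x-\gamma_i(2^{-k}t))\,dt=(A_kF)(x,x,x)$ with $F=f_1\otimes f_2\otimes f_3$, you propose to control the $L^1(\mathbb{R})$ norm of the restriction by an $L^p(\mathbb{R}^3)\to L^q(\mathbb{R}^3)$ improving bound for $A_k$. No amount of integrability can do this: an $L^q(\mathbb{R}^3)$ function has no trace on the measure-zero diagonal, and pairing against $g\,d\delta_{\mathrm{diag}}$ (codimension $2$) requires Sobolev regularity of order roughly $>2/q$, not extra integrability --- so the sentence ``the improving estimate supplies exactly the extra integrability needed to make this pairing bounded'' is precisely the false step. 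If you instead make the trace meaningful via Bernstein on the Fourier support $\{|\zeta_i|\lesssim 2^{l+ik}\}$, the cost is a factor like $2^{2(l+3k)/q}$, which swamps the tiny gain $2^{-cl}$ and destroys uniformity in $k$. Your fallback --- prove $L^2\times L^2\times L^\infty$ bounds and interpolate to $(\tfrac13,\tfrac13,\tfrac13)$ --- is not a repair but a restatement of the hard problem: with only two $L^2$ slots, Plancherel plus van der Corput on the symbol $m(\zeta)$ yields nothing (the $L^\infty$ entries have no usable Fourier representation), and extracting decay in exactly that regime is the genuinely trilinear difficulty that constitutes the content of Hu--Lie \cite{HL} and Kosz--Mirek--Peluse--Wright \cite{Kosz}. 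A secondary gap: the ``reduction to the resonant piece'' where every $\hat f_i$ lives on $|\xi|\sim 2^{l+ik}$ is unjustified, since the hypothesis localizes only one $f_i$, and summing the off-resonant Littlewood--Paley interactions itself requires the smoothing estimate you are trying to prove.

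You have also misidentified the key input, and with it the mechanism that makes transference to one dimension possible. The KMPW result (the paper's Lemma \ref{l:key}) is not a linear $L^p$-improving estimate for the average along the moment curve; it is a \emph{multilinear} estimate for $A_{-k}(F_1,\dots,F_n)$ acting on general functions $F_i$ on $\mathbb{R}^n$, each translated only in its own coordinate. The paper (whose Propositions \ref{p:sob00} and \ref{p:sob1} contain the stated trilinear case as $n=3$, $d_i=i$, $a_i=1$; the statement itself is quoted from \cite{HL}) exploits this generality by transferring in the \emph{opposite} direction from yours: instead of tensoring and restricting the output to the diagonal, it embeds the one-dimensional data as approximately cylindrical functions $F_i(x):=\epsilon^{1-1/n}\varphi(\epsilon x)\,g_i(x\cdot\vec{1})$, constant along the diagonal direction up to a wide bump. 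The multilinear operator applied to such inputs is itself essentially a function of $x\cdot\vec{1}$, so its $L^1(\mathbb{R}^n)$ norm factors as a bump mass times $\|B_k(g_1,\dots,g_n)\|_{L^1(\mathbb{R})}$ plus $O_k(\epsilon)$ errors, and one sends $\epsilon\downarrow 0$; no trace is ever taken. Note that the choice that would make your restriction identity exact at the multilinear level, $F_i(x)=f_i(x_i)$, has infinite $L^n(\mathbb{R}^n)$ norm --- which is exactly why the cylindrical embedding along $\vec{1}$, rather than a tensor product plus diagonal trace, is the move that closes the argument.
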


The goal of this paper is to address multi-linear analogues of $B^{\vec{\gamma}}$ under the simplifying assumption that our curves are homogeneous polynomials. 

Specifically, we will be concerned with
\[ \vec{\gamma}(t) := (\gamma_1(t),\dots,\gamma_n(t)) = (a_1 t^{d_1},\dots,a_n t^{d_n}), \; \; \; 1\leq d_1 < \dots < d_n, \ a_i \neq 0.\]
We prove the following.

\begin{theorem}\label{t:main}
Suppose that $p_1,\dots,p_n > 1$, and that
\[ \frac{1}{p} = \frac{1}{p_1} + \dots + \frac{1}{p_n} \leq 1.\]
Then there exists an absolute constant $0 < C_{p_1,\dots,p_n;\vec{\gamma}} < \infty$ so that 
\begin{align} \ \| \sup_{r > 0} \ \big| \frac{1}{r} \int_0^r \prod_{i=1}^n f_i(x-\gamma_i(t)) \ dt \big| \|_{L^p(\mathbb{R})}\leq C_{p_1,\dots,p_n;\vec{\gamma}} \prod_{i=1}^n \| f_i \|_{L^{p_i}(\mathbb{R})}.
\end{align}
\end{theorem}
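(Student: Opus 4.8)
The plan is to reduce to a dyadic maximal operator, separate a slowly-varying low-frequency piece controlled by single-variable maximal averages from an oscillatory high-frequency piece, and treat the latter with the smoothing estimate. Writing $A_r\vec f(x):=\frac1r\int_0^r\prod_{i=1}^n f_i(x-\gamma_i(t))\,dt$ and replacing each $f_i$ by $|f_i|$, positivity together with $\frac1r\int_0^r\le \frac{2}{2^{k+1}}\int_0^{2^{k+1}}$ for $r\in[2^k,2^{k+1}]$ gives $\sup_{r>0}A_r\vec f\lesssim\sup_{k\in\mathbb Z}A_{2^k}\vec f$, so it suffices to bound the dyadic maximal operator. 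Homogeneity yields $A_{2^k}\vec f(x)=\int_0^1\prod_i f_i(x-a_i2^{d_ik}s^{d_i})\,ds$, so the natural frequency threshold for the $i$-th factor at scale $k$ is $2^{-d_ik}$. I would apply a Littlewood-Paley decomposition to each $f_i$ relative to these thresholds and split $A_{2^k}\vec f$ into the term in which every factor lies below its threshold and the terms in which at least one factor lies above it.

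For the low-frequency term I would bound $A_{2^k}$ pointwise by a product of single-variable maximal averages: a Hölder split in the $s$-integral with exponents $q_i$ satisfying $\sum_i 1/q_i\le1$ gives $A_{2^k}\vec f(x)\le\prod_i\big(\int_0^1|f_i(x-\gamma_i(2^ks))|^{q_i}\,ds\big)^{1/q_i}$, whence $\sup_k A_{2^k}\vec f\le\prod_i\big(M_{\gamma_i}(|f_i|^{q_i})\big)^{1/q_i}$, where $M_{\gamma_i}$ is the maximal average along the monomial $t\mapsto a_it^{d_i}$. These single-curve operators are bounded on every $L^{s}$, $s>1$, by Stein-Wainger theory, so choosing $q_i<p_i$ and applying Hölder in $x$ with $1/p=\sum 1/p_i$ yields $\|\sup_kA_{2^k}\vec f\|_{L^p}\lesssim\prod_i\|f_i\|_{L^{p_i}}$. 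This argument requires the slack $q_i<p_i$ with $\sum 1/q_i\le1$, which is available precisely when $1/p<1$; the endpoint $1/p=1$ is handled separately below.

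For the high-frequency terms, each contains a factor whose frequency exceeds its threshold by a factor $2^l$, and here I would invoke the smoothing estimate adapted from Kosz-Mirek-Peluse-Wright -- the multilinear analogue of Propositions \ref{p:sob}--\ref{p:sob0} -- to gain $2^{-cl}$ in the single-scale bound. To promote this to a bound on $\sup_k$ I would use $\sup_k|g_k|\le(\sum_k|g_k|^q)^{1/q}$ together with $\|(\sum_k|g_k|^q)^{1/q}\|_{L^q}^q=\sum_k\|g_k\|_{L^q}^q$, reducing matters to summing single-scale estimates. For a fixed Littlewood-Paley configuration $\vec m=(m_1,\dots,m_n)$ the associated remainder is nonzero only for scales $k$ past a threshold $k_*(\vec m)$, on which $l$ grows linearly in $k$; the gain $2^{-cl}$ is therefore summable in $k$, and since only polynomially many configurations share a given value of $l$ it also controls the sum over the Littlewood-Paley pieces.

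Finally, to reach the full range $p_i>1$, $1/p\le1$ I would combine the two terms and interpolate. The estimates above, together with the trivial bound $\sup_kA_{2^k}\vec f\le\prod_i\|f_i\|_{L^\infty}$, give the claim throughout the interior $1/p<1$ by multilinear interpolation. The endpoint surface $1/p=1$ must be anchored directly: there the low-frequency maximal function can no longer be dominated by single-curve maximal operators with slack, and one must instead exploit the smoothing estimate at the balanced configuration $p_1=\cdots=p_n=n$. I expect the two genuine obstacles to be (i) transferring the fixed single-scale smoothing estimate to the supremum over all scales -- arranging the low/high splitting so the oscillatory remainder decays geometrically for every relevant $k$, and summing over scales and frequency configurations -- and (ii) the endpoint $p=1$, where the low-frequency maximal bound and the attendant vector-valued arguments are most delicate.
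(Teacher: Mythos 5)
Your reduction to dyadic scales and your treatment of the open region $1/p<1$ are fine --- indeed, as you essentially observe, in that region the theorem is trivial: the H\"older split in $t$ together with the pointwise domination of each single-curve maximal operator $M_{\gamma_i}$ by the Hardy--Littlewood maximal function bounds the \emph{full} operator, with no frequency decomposition at all. So the entire content of the theorem is the endpoint surface $\sum_i 1/p_i=1$, and that is exactly where your argument has a genuine gap. The gap is not where you locate it: the all-low term is harmless even at the endpoint, since for $t\in[0,1]$ the shift $|\gamma_i(2^{-k}t)|\le |a_i|2^{-kd_i}$ is comparable to the scale of the mollifier, whence $|\varphi_{kd_i}*f_i(x-\gamma_i(2^{-k}t))|\lesssim Mf_i(x)$ and H\"older finishes. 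The real problem is the \emph{mixed} terms, with at least one factor above and at least one below threshold, and there your counting claim --- ``only polynomially many configurations share a given value of $l$'' --- is false. If you Littlewood--Paley decompose every factor and write $m_i=-d_ik+s_i$, the smoothing gain is $2^{-c\max_i s_i}$, and the configurations with $\max_i s_i=l$ form an \emph{infinite} family, because the below-threshold offsets $s_i$ range over all negative integers and the smoothing estimate gives (and can give) no decay in them: one factor at threshold and the others at very low frequency produce no smallness. Summing your single-scale bounds then yields $\big(\sum_{\vec s\,:\,\max_i s_i\ge 0}2^{-c\max_i s_i}\big)\prod_i\|f_i\|_{L^n}=\infty$. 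If instead you keep the low factors as low-pass blocks, the sum over $k$ fails: those factors contribute $\|f_i\|_{L^n}$ uniformly in $k$, and the $k$-sum can no longer be absorbed by square functions, since the H\"older/Fubini argument $\sum_k\prod_i\|\psi_{kd_i+s_i}*f_i\|_{L^n}\le\prod_i\big(\sum_k\|\psi_{kd_i+s_i}*f_i\|_{L^n}^n\big)^{1/n}\lesssim\prod_i\|Sf_i\|_{L^n}$ needs all $n$ factors to sit in $k$-dependent annuli.

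The idea missing from your proposal is the paper's induction on the degree of multilinearity, implemented through the Taylor expansion \eqref{e:Tayexp}: a low-pass factor is never decomposed further, but is Taylor expanded about $x$ so that it factors out of the $t$-integral as a pointwise multiplier bounded by $Mf_n$, leaving an $(n-1)$-linear operator of the same type whose \emph{supremum} over $k$ (not an $\ell^1$-sum over $k$) is controlled by the inductive hypothesis. Iterating this reduces matters to $B_k(f_1-\varphi_{kd_1}*f_1,\dots,f_n-\varphi_{kd_n}*f_n)$, where every factor is high-pass; only then is the Littlewood--Paley decomposition performed, so that all $s_i\ge0$, the configuration count at level $s=\max_i s_i$ really is $O(s^n)$, and the scale sum closes via square functions (Proposition \ref{p:sob1} plus the computation leading to \eqref{e:sum}). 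A second, smaller gap: interpolating your $L^n\times\cdots\times L^n\to L^1$ smoothing bound against the trivial $L^\infty$ bound only reaches exponents on the diagonal $p_1=\cdots=p_n\ge n$; to cover the whole endpoint surface (say $p_1$ near $1$ and the others large) the paper proves the shifted-maximal-function estimate (Lemma \ref{l:shifted} and Proposition \ref{p:shifted}), which costs only $s^n$ at every admissible tuple and serves as the second leg of the interpolation. Without substitutes for both ingredients, your scheme does not prove the theorem at any point of the endpoint surface, which is the only part of the statement with content.
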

In point of fact, the methods introduced below allow one to prove estimates slightly outside the Banach range, for all
\[ \frac{1}{p} < 1 + c_{\vec{\gamma}}, \; \; \; c_{\vec{\gamma}} > 0\]
see Remark \ref{r:subL1} below. 

As might be expected, the key input in proving Theorem \ref{t:main} is the following multi-linear Sobolev estimate:

\begin{proposition}\label{p:sob00}
    Suppose that $\gamma_i(t) = a_it^{d_i}, \ 1 \leq i \leq n$ with $1 \le d_1 < d_2< \dots< d_n, \ a_i \neq 0$, and that $\hat{f_i}$ vanishes outside $\{ |\xi| \leq 2^{l + d_ik} \}$ for some $i$. Then there exist absolute constants $0 < c  < C < \infty$ so that
    \begin{align}
        \| \int_0^1 \prod_{i=1}^n f_i(x-\gamma_i(2^{-k} t))  \ dt \|_{L^1(\mathbb{R})} \leq C 2^{-cl} \prod_{i=1}^n \| f_i \|_{L^n(\mathbb{R})}.
    \end{align}
\end{proposition}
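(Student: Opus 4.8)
The plan is to dualize the $L^1$ norm and extract a power gain from the oscillation of the single frequency-localized factor, using that homogeneity makes a top-order derivative of the relevant phase a nonvanishing constant.

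First I would reduce and set up the oscillatory framework. Writing $b_i := a_i 2^{-d_i k}$, the form becomes $T(x) = \int_0^1 \prod_{i=1}^n f_i(x - b_i t^{d_i})\,dt$, and by duality $\|T\|_{L^1} = \int_0^1\!\int g(x)\prod_i f_i(x - b_i t^{d_i})\,dx\,dt$ for some $g$ with $\|g\|_{L^\infty}\le 1$. H\"older in $x$ together with Minkowski in $t$ already give the trivial bound $\prod_i\|f_i\|_{L^n}$ with no gain; the entire content of the statement is to improve this by $2^{-cl}$, and that improvement can only come from cancellation in the $t$-integral produced by the localized factor. Expanding only that factor (say $f_{i_0}$, which after exploiting homogeneity and relabeling I would arrange to be the top-degree one, $i_0 = n$) by Fourier inversion introduces the phase $\Phi(t) = \sum_i \xi_i b_i t^{d_i}$ inside the $t$-integral, with $|\xi_n b_n|\sim 2^l$ on the support of $\hat f_n$.

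The oscillatory heart is the observation that, because the $d_i$ are distinct and the curve is a pure monomial, $\Phi^{(d_n)}(t) = d_n!\,\xi_n b_n$ is a nonzero constant of size $\sim 2^l$, every lower-degree term being annihilated by $d_n$ differentiations. Thus a van der Corput estimate of order $d_n$ bounds $\int_0^1 e^{-2\pi i \Phi(t)}\,dt$ by $2^{-l/d_n}$, uniformly in the remaining, entirely unconstrained frequencies $\xi_1,\dots,\xi_{n-1}$. This robustness --- one localized factor forcing decay regardless of the others --- is exactly what homogeneity buys, and is the reason the hypothesis need only control a single $f_i$.

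The step I expect to be the main obstacle is converting this pointwise oscillatory decay into the claimed bound without surrendering the gain. The difficulty is that the unlocalized factors $f_i$ ($i\ne n$) lie only in $L^n$: they cannot be placed on the Fourier side (their $\hat f_i$ need not be integrable) nor differentiated as van der Corput amplitudes, and a naive Fourier-plus-Young argument loses precisely the factor $2^{-l/d_n}$ against the $L^1$-mass of $\hat f_n$ over its width-$2^l$ support. To get around this I would follow the local-smoothing / iterated Cauchy--Schwarz scheme adapted from Kosz--Mirek--Peluse--Wright: pair the spatial variables so that the roughness of the unlocalized factors is absorbed into $L^2$ (hence, after H\"older, $L^n$) norms while the $t$-oscillation is retained as an operator carrying the van der Corput decay. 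Arranging this so that the exponents balance to $\prod_i\|f_i\|_{L^n}$ and the gain $2^{-cl}$ survives is the technical core; once it is in place one concludes by interpolating against the trivial H\"older bound and summing over the localization scale, tracking the dependence of $C$ on $\vec\gamma$ through the coefficients $a_i$ and the van der Corput constants.
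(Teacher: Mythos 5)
Your proposal stalls exactly at the point that constitutes the entire content of the proposition, and what you describe as "the technical core" is never carried out. Converting van der Corput decay of $\int_0^1 e^{-2\pi i\Phi(t)}\,dt$ into an $L^1$ bound against $\prod_i\|f_i\|_{L^n}$ when $n-1$ of the factors are rough ($L^n$ only, not Fourier-expandable) \emph{is} the theorem; deferring it to "follow the local-smoothing / iterated Cauchy--Schwarz scheme adapted from Kosz--Mirek--Peluse--Wright" leaves it unproved. For $n=2$ such a scheme (one Cauchy--Schwarz plus a change of variables) is classical, but for $n\geq 3$ iterated Cauchy--Schwarz applied to the diagonal one-dimensional form degrades the gain and produces degenerate lower-order forms; overcoming this is precisely the hard theorem of \cite{Kosz}, and their argument is carried out for the \emph{split} operator, where each $F_i$ depends on a different coordinate of $\mathbb{R}^n$, not for the diagonal form $\int_0^1\prod_i f_i(x-\gamma_i(2^{-k}t))\,dt$ you are attacking. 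The paper avoids re-deriving any oscillatory estimate: it quotes \cite[Theorem 6.1]{Kosz} for the split operator $A_{-k}$ as a black box (Lemma \ref{l:key}), uses homogeneity through the anisotropic dilations $D_\lambda$ to remove the $k$-dependence, and then \emph{transfers} the split estimate to the line by a projection argument --- lifting $g_i$ on $\mathbb{R}$ to $F_i(x)=\epsilon^{1-1/n}\varphi(\epsilon x)g_i(x\cdot\vec{1})$ on $\mathbb{R}^n$, applying Lemma \ref{l:key}, and letting $\epsilon\downarrow 0$ so that $A_k$ collapses onto $B_k$ along the diagonal $\mathbb{R}\vec{1}$. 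If you want a direct one-dimensional proof you must genuinely redo the KMPW analysis in the diagonal setting, which is a substantial and non-routine undertaking; the existence of the paper's transference argument is itself evidence that this adaptation is not straightforward.

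There is also a concrete error earlier in the proposal: the claim that "after exploiting homogeneity and relabeling" you may assume the localized index is $i_0=n$. The degrees $d_i$ are attached to the factors, and the hypothesis only localizes \emph{some} $f_{i_0}$, possibly $i_0=1$. In that case your mechanism breaks: $\Phi^{(d_n)}(t)=d_n!\,\xi_n b_n$ does not see $\xi_{i_0}$ at all (differentiating $d_n$ times annihilates $\xi_{i_0}b_{i_0}t^{d_{i_0}}$), while differentiating only $d_{i_0}$ times leaves higher-degree terms with unconstrained coefficients. This particular point is repairable via the polynomial van der Corput / sublevel-set bound $|\int_0^1 e^{i\sum_j c_jt^j}\,dt|\lesssim_d (\max_j|c_j|)^{-1/d}$, which gives decay once \emph{any} coefficient is large --- but invoking it requires the phase to carry all $n$ frequencies, i.e.\ all factors placed on the Fourier side, which is exactly what the roughness of the unlocalized $f_i$ forbids. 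So the repair feeds straight back into the main gap above.
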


To establish Proposition \ref{p:sob00}, we adapt a recent result of Kosz-Mirek-Peluse-Wright \cite{Kosz}; with Proposition \ref{p:sob00} in hand, Theorem \ref{t:main} readily presents. While it is reasonable to expect that an analogue of Proposition \ref{p:sob00}, and thus Theorem \ref{t:main}, should hold for more general distinct-degree polynomial curves $\gamma_i$ which vanish to distinct degrees at the origin, our argument crucially relies on homogeneity, and does not readily adapt to the more general setting. We hope to address this matter in later work.

\subsection{Notation}\label{ss:not}
Throughout, we let $\varphi$ denote various mean-one Schwartz functions, normalized in some sufficiently large semi-norm. The precise choice of $\varphi$ might differ from line to line. Similarly, we use $\psi$ to denote a similar function, but with
\begin{align}
    \mathbf{1}_{|\xi| \approx 1} \leq \hat{\psi} \leq \mathbf{1}_{|\xi| \approx 1}
\end{align}
so that
\begin{align}
    \sum_l \psi(\xi/2^l) = \mathbf{1}_{\xi \neq 0}.
\end{align}
We use the following notation to denote $L^1$-normalized dilations:
\[ \phi_k(x) := 2^{k} \phi(2^{k} x), \]
and let
\begin{align}\label{e:Bk}
    B_k^{\vec{\gamma}}(f_1,\dots,f_n)(x) := B_k(f_1,\dots,f_n)(x):= \int_0^1 \prod_{i=1}^n f_i(x-\gamma_i(2^{-k} t)) \ dt.
\end{align}

Below, we will regard $a_1,\dots, a_n = O(1)$ as arbitrary but fixed, and will abbreviate
\begin{align}\label{e:D}
D := D(\vec{\gamma}) := d_1 + \dots + d_n.
\end{align}

\subsubsection{Asymptotic Notation} We will make use of the modified Vinogradov notation. We use $X \lesssim Y$, or $Y \gtrsim X$, to denote the estimate $X \leq CY$ for an absolute constant $C$. We use $X \approx Y$ as shorthand for $Y \lesssim X \lesssim Y$. We also make use of big-O notation: we let $O(Y )$ denote a quantity that is $\lesssim Y$. We let $f(t) := o_{t \to a}(X(t))$ denote a quantity so that $\frac{|f(t)|}{X(t)} \to 0$ as $t \to a $.

If we need $C$ to depend on a parameter, we shall indicate this by subscripts, thus for instance $X \lesssim_p Y$ denotes the estimate $X \leq C_p Y$
for some $C_p$ depending on $p$. We analogously define $O_p(Y)$.

\section{Sobolev Estimates}
The main goal of this section is to prove the following Sobolev estimate.

\begin{proposition}\label{p:sob1}
There exists an absolute $c > 0$ so that the following single scale estimate holds whenever $s_i \geq 0$:
\begin{align}
    \| B_k(\psi_{kd_1+s_1}*f_1,\dots,\psi_{kd_n+s_n}*f_n) \|_{L^1(\mathbb{R})} \lesssim 2^{-cs} \prod_{i=1}^n \| f_i \|_{L^n(\mathbb{R})},
\end{align}
where $s := \max\{ s_i \}$. 
\end{proposition}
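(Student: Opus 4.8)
The plan is to pass to the Fourier side, realize the operator as a multilinear multiplier, and extract the gain $2^{-cs}$ from oscillation in the symbol. Write $g_i := \psi_{kd_i+s_i}*f_i$, so that $\hat g_i$ is supported in the annulus $A_i := \{|\xi_i|\approx 2^{kd_i+s_i}\}$ and $\|g_i\|_{L^n}\lesssim\|f_i\|_{L^n}$ by Young's inequality. Fourier inversion in \eqref{e:Bk} then gives
\begin{align}
B_k(g_1,\dots,g_n)(x) = \int_{\mathbb{R}^n} m(\xi)\prod_{i=1}^n \hat g_i(\xi_i)\, e^{2\pi i(\xi_1+\dots+\xi_n)x}\, d\xi, \qquad m(\xi) := \int_0^1 e^{-2\pi i\sum_{i=1}^n \xi_i a_i 2^{-kd_i}t^{d_i}}\, dt.
\end{align}
The first thing to record is the trivial bound: for each fixed $t$, Hölder with exponents $(n,\dots,n)$ yields $\|\prod_i g_i(\cdot-\gamma_i(2^{-k}t))\|_{L^1}\le\prod_i\|g_i\|_{L^n}$, whence $\|B_k(g)\|_{L^1}\lesssim\prod_i\|f_i\|_{L^n}$. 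This is exactly the claimed estimate with the factor $2^{-cs}$ discarded, so the entire content of the proposition is to recover the gain from cancellation in the $t$-integral.

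To make that cancellation usable I would bound the \emph{Fourier-algebra norm} of $m$ on the box $\prod_i A_i$. Expanding $m$ (times a smooth cutoff to a comparable box) into exponentials adapted to the sidelengths $L_i\approx 2^{kd_i+s_i}$, say $m = \sum_{\vec\nu}c_{\vec\nu}\,e^{2\pi i\sum_i\nu_i\xi_i/L_i}$ on $\prod_i A_i$, turns $T_m$ into a superposition of products of translates: the multiplier $e^{2\pi i\sum_i\nu_i\xi_i/L_i}$ produces $\prod_i g_i(\,\cdot+\nu_i/L_i)$, whose $L^1$ norm is again $\le\prod_i\|g_i\|_{L^n}$ by Hölder and translation invariance. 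Consequently
\begin{align}
\|B_k(g_1,\dots,g_n)\|_{L^1}\le\Big(\sum_{\vec\nu}|c_{\vec\nu}|\Big)\prod_{i=1}^n\|f_i\|_{L^n},
\end{align}
and the proposition reduces to the single inequality $\sum_{\vec\nu}|c_{\vec\nu}|\lesssim 2^{-cs}$.

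This last bound is the heart of the matter, and it is where I would invoke and adapt the smoothing estimate of Kosz--Mirek--Peluse--Wright \cite{Kosz}. Exploiting homogeneity, I rescale $\xi_i = L_i\eta_i$ with $\eta_i$ in a fixed annulus; the symbol becomes $\int_0^1 e^{-2\pi i\sum_i 2^{s_i}a_i\eta_i t^{d_i}}\,dt$, a quantity that no longer sees $k$ and whose phase $\phi_\eta(t)=\sum_i 2^{s_i}a_i\eta_i t^{d_i}$ has a coefficient of size $\approx 2^{s}$. Since the degrees $d_1<\dots<d_n$ are distinct, this is a genuine polynomial of degree $d_n$, and the uniform van der Corput / sublevel-set estimate (with constant depending only on $d_n$ and the largest coefficient) gives the pointwise decay $|m|\lesssim 2^{-s/d_n}$.

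The main obstacle is that this $L^\infty$ decay alone does \emph{not} control $\sum_{\vec\nu}|c_{\vec\nu}|$: the symbol $m$ oscillates across the box at scale $2^{s_i}$ in the $i$-th variable, so there are up to $\prod_i 2^{s_i}$ a priori relevant coefficients, and a crude Cauchy--Schwarz count reintroduces a factor $(\prod_i 2^{s_i})^{1/2}$ that destroys the gain. The KMPW smoothing estimate is precisely what lets one bypass this: for each $t$ the $\xi$-integral defining $c_{\vec\nu}$ localizes each $\nu_i$ to within $O(2^{-s_i})$ of $-a_i t^{d_i}$, so that only few coefficients are non-negligible per scale, and integrating these contributions against the $t$-average --- using the distinct-degree nondegeneracy to keep the critical points of $\phi_\eta$ under control --- yields $\sum_{\vec\nu}|c_{\vec\nu}|\lesssim 2^{-cs}$ for an absolute $c>0$. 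Should the coefficient sum prove awkward to bound directly, I would instead interpolate the sharpened KMPW oscillatory estimate against the trivial bound of the first paragraph to reach the same conclusion. It is exactly here that homogeneity is indispensable: it renders $\phi_\eta$ a clean dilation-invariant polynomial in $t$, so that the oscillatory analysis is uniform in $k$ and reduces to a single scale-free computation.
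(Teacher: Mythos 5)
Your reduction of the proposition to the coefficient-sum bound $\sum_{\vec\nu}|c_{\vec\nu}|\lesssim 2^{-cs}$ is where the argument breaks: that inequality is \emph{false} for $n\geq 2$, in particular in the essential case $s_1=\dots=s_n=s$. After your rescaling, $\widehat{m\chi}$ is the pushforward to $\mathbb{R}^n$ of Lebesgue measure on $[0,1]$ under $t\mapsto(2^{s_1}a_1t^{d_1},\dots,2^{s_n}a_nt^{d_n})$, convolved with the fixed Schwartz function $\hat\chi$; this is a mass-one measure spread along a curve, and the $c_{\vec\nu}$ are its unit-scale samples. In the model case $n=2$, $d_1=1$, $d_2=2$, $s_1=s_2=s$, a direct computation near a curve point with parameter $t_0\approx 1$ gives $\widehat{m\chi}(\vec x)\approx 2^{-s}\,\big(\hat\chi_1(-\cdot)*\hat\chi_2\big)(x_2-2^{-s}x_1^2)$; since $\chi_1$ and $\chi_2$ are both $\equiv 1$ on the unit annulus, the function $\hat\chi_1(-\cdot)*\hat\chi_2$ is a fixed \emph{nonzero} Schwartz function, and integrating over the tube of length $\approx 2^s$ around the curve yields $\sum_{\vec\nu}|c_{\vec\nu}|\approx\|\widehat{m\chi}\|_{L^1}\approx 1$. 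No choice of cutoff, nor even of extension of $m$ off the box, can repair this: testing against $h=\bar m\chi/\|\widehat{m\chi}\|_\infty$, which satisfies $\|\check h\|_\infty\leq 1$ and is supported in the box, bounds every such algebra norm below by $\int|m|^2\chi\,/\,\|\widehat{m\chi}\|_\infty\approx 2^{-s}/2^{-s}=1$ (here $|m|\approx 2^{-s/2}$ on a fixed fraction of the box by stationary phase, and $\|\widehat{m\chi}\|_\infty\approx 2^{-s}$). The conceptual reason is that your inequality bounds $B_k$ by a sum of \emph{absolute values} of the operators $(g_1,\dots,g_n)\mapsto\prod_i g_i(\cdot+\nu_i/L_i)$, and individual products of translates exhibit no smoothing at all: with $g_i(x)=e^{2\pi i 2^{kd_i+s_i}x}\phi(x)$, which has the required frequency support, one has $\|\prod_i g_i(\cdot+\tau_i)\|_{L^1}\approx\prod_i\|g_i\|_{L^n}$. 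The gain $2^{-cs}$ is a cancellation \emph{between} translates and is invisible at the level of $\sum_{\vec\nu}|c_{\vec\nu}|$; your own localization observation (each $t$ contributes to $O(1)$ coefficients) in fact shows the sum is $\approx\int_0^1 dt=1$, since there is no sign cancellation left to exploit after taking absolute values.

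Relatedly, your use of \cite{Kosz} misreads what that result provides. Theorem 6.1 there (Lemma \ref{l:key} of the paper) is an $L^1(\mathbb{R}^n)$ \emph{operator} estimate for the multilinear average $A_{-k}$ acting on functions $F_i$ of $n$ variables with a Fourier-support hypothesis; it is not a scalar oscillatory-integral, sublevel-set, or Fourier-coefficient estimate, so it cannot feed into your coefficient sum (which, as above, admits no decaying bound). The paper instead transfers the KMPW operator estimate to one dimension by a projection argument: it lifts $g_i$ to $F_i(x):=\epsilon^{1-1/n}\varphi(\epsilon x)\,g_i(x\cdot\vec{1})$ with $\vec{1}=n^{-1/2}(1,\dots,1)$, checks that band-limitation of some $g_j$ forces $\hat F_j$ to vanish for $|\xi|\lesssim 2^{kd_j+s}$, applies the ($k$-uniform, rescaled) Lemma \ref{l:key}, and lets $\epsilon\downarrow 0$, identifying $\lim_{\epsilon\to0}\|A_k(F_1,\dots,F_n)\|_{L^1(\mathbb{R}^n)}$ with $\|\varphi^n\|_{L^1}\,\|B_k(g_1,\dots,g_n)\|_{L^1(\mathbb{R})}$ via Taylor expansion. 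Your trivial H\"older bound and the van der Corput bound $|m|\lesssim 2^{-s/d_n}$ are both correct, but there is no interpolation scheme that converts an $L^\infty$ symbol bound plus the trivial bound into the multilinear $L^n\times\dots\times L^n\to L^1$ smoothing estimate; some genuinely multilinear input, such as the transference to \cite{Kosz}, is indispensable.
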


The proof of Proposition \ref{p:sob} will be accomplished via a projection argument, anchored by \cite[Theorem 6.1]{Kosz} in the case where $\gamma_i(t) = a_i t^{d_i}$, and $\mathbb{K} = \mathbb{R}$, which dictates that the below operator
    \begin{align}\label{e:A_{-k}}
        A_{-k}(F_1,\dots,F_n)(x_1,\dots,x_n) := \frac{1}{2^k} \int_0^{2^k} \prod_{i=1}^n F_i(x_1,\dots,x_i - a_i t^{d_i},\dots, x_n) \ dt
    \end{align}
satisfies non-trivial norm estimates whenever some $\widehat{F_i}$ vanishes in $|\xi_i| \leq 2^{-k d_i} \delta^{-1}$.

\begin{lemma}[Theorem 6.1 of \cite{Kosz}, Special Case]\label{l:key}
In the above setting, suppose that some $\widehat{F_i}$ vanishes in $|\xi_i| \leq 2^{-k d_i} \delta^{-1}$. Then there exists some absolute $c>0$ so that
\begin{align}\label{e:sob0}
    \| A_{-k}(F_1,\dots,F_n) \|_{L^1(\mathbb{R}^n)} \lesssim_n (\delta^c + 2^{-k c} ) \prod_{i=1}^n \| F_i \|_{L^n(\mathbb{R}^n)},
\end{align}
    provided $k \geq 0$. 
\end{lemma}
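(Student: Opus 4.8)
Since the statement is recorded as \cite[Theorem 6.1]{Kosz} specialized to $\mathbb{K}=\mathbb{R}$ and $\gamma_i(t)=a_it^{d_i}$, the most efficient route is to deduce it from that theorem; my plan is to set up the specialization and then indicate the oscillatory mechanism behind it, which is what makes the bound do work in Section~2. For the specialization I would fix the distinct-degree, nonzero-coefficient monomial curve, read off the $n$-dimensional model average \eqref{e:A_{-k}} at scale $2^k$, and translate the abstract frequency-separation hypothesis of \cite{Kosz} into the present condition that $\widehat{F_i}$ vanish on $|\xi_i|\le 2^{-kd_i}\delta^{-1}$, i.e.\ that the $i$-th input carry no $\xi_i$-frequency below the scale $2^{-kd_i}$ natural to the $i$-th component of $\vec\gamma$. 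The only genuine checks are that the monomial curve meets the nondegeneracy requirement of \cite{Kosz} (it does, precisely because the $d_i$ are distinct) and that the $L^n\times\cdots\times L^n\to L^1$ normalization matches, after which the conclusion is exactly \eqref{e:sob0}.

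To see why the estimate holds — and to keep the specialization self-contained — I would first reduce the sharp Hölder bound to an $L^2$-based one. The trivial bound $\|A_{-k}(F_1,\dots,F_n)\|_{L^1(\mathbb{R}^n)}\le \prod_i\|F_i\|_{L^n(\mathbb{R}^n)}$ holds by Hölder in $t$, then Hölder in $x$ with all exponents equal to $n$, using that the $L^n$ norm is translation invariant. Since the gain $(\delta^c+2^{-kc})$ is only needed up to a loss in the exponent $c$, it suffices to produce any fixed power of it at one auxiliary exponent tuple and then interpolate (multilinearly) against this trivial bound at a nearby tuple straddling $(1/n,\dots,1/n)$; a sub-polynomial factor $(\delta^c+2^{-kc})^\theta$ survives such interpolation and is again of the required shape. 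At the auxiliary point I would place the frequency-localized $F_i$ in $L^2$ and the remaining inputs in $L^\infty$.

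The decay itself comes from oscillation. After rescaling $t\mapsto 2^k s$ the $i$-th shift becomes $a_i 2^{kd_i}s^{d_i}$, so on the Fourier support $|\xi_i|\ge 2^{-kd_i}\delta^{-1}$ the phase $\xi_i a_i 2^{kd_i}s^{d_i}$ has a coefficient of size $\gtrsim \delta^{-1}$. Morally the bound is then governed by the one-dimensional oscillatory integral $\int_0^1 e^{2\pi i\sum_j\lambda_j s^{d_j}}\,ds$, and the distinctness of $d_1<\cdots<d_n$ — which prevents any collapse of the phase — yields the classical van der Corput/sublevel-set bound $\lesssim_{d_n}(\max_j|\lambda_j|)^{-1/d_n}$. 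With the localized coefficient forcing $\max_j|\lambda_j|\gtrsim\delta^{-1}$ this gives the factor $\delta^{c}$. The companion term $2^{-kc}$ accounts for the finite averaging window $[0,1]$ and the region near $s=0$ where the phase oscillates too slowly; both are handled by a crude cutoff whose contribution is a power of $2^{-k}$.

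The real difficulty, and the technical core of \cite{Kosz}, is that the $s$-integral couples all $n$ factors, so the scalar oscillatory integral above cannot simply be peeled off for general $L^2$ inputs. The standard remedy is a degree-lowering (PET) iteration: one repeatedly applies Cauchy--Schwarz/van der Corput in $s$, replacing the highest-degree factor by differences and lowering the degrees of the remaining phases, until a base case is reached in which a single, genuinely oscillating one-parameter integral can be estimated directly. Executing this iteration with uniform control on the constants, and cleanly separating the two regimes responsible for the $\delta^c$ and $2^{-kc}$ errors, is the main bookkeeping burden; for the present purposes it is cleanest to invoke \cite[Theorem 6.1]{Kosz} rather than reproduce the iteration.
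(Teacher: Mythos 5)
Your proposal is correct and matches the paper, which itself offers no proof of Lemma \ref{l:key} but simply records it as the special case $\mathbb{K}=\mathbb{R}$, $\gamma_i(t)=a_it^{d_i}$ of \cite[Theorem 6.1]{Kosz} --- exactly the citation-plus-specialization route you take, with the hypothesis translation and normalization checks you describe being the only genuine content. Your added sketch of the underlying mechanism (rescaling $t\mapsto 2^k s$, van der Corput, and the PET-style degree-lowering iteration in \cite{Kosz}) is accurate context but not required, since you rightly defer the iteration itself to the cited theorem.
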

We now remove the dependence on $k$ on the right side of \eqref{e:sob0}, and address the case where $k \leq 0$. Specifically, we prove that for all $k$
\begin{align}\label{e:sob00}
    \| A_{-k}(F_1,\dots,F_n) \|_{L^1(\mathbb{R}^n)} \lesssim_n \delta^c  \prod_{i=1}^n \| F_i \|_{L^n(\mathbb{R}^n)}
\end{align}
whenever some $\widehat{F_i}$ vanishes in $|\xi_i| \leq 2^{-k d_i} \delta^{-1}$; for concreteness, suppose that this index is $j$.

To do so, introduce the operator
\begin{align}
D_\lambda F(x_1,\dots,x_n) := F(\lambda^{d_1} x_1,\dots,\lambda^{d_n} x_n)
\end{align}
and choose $2^{k_0} \gg \delta^{-1}$. If we define
\begin{align}
    G_i(x_1,\dots,x_n) := D_{2^{k-k_0}}F_i(x_1,\dots,x_n),
\end{align}
then $\widehat{G_j}$ vanishes on $|\xi_j| \leq  2^{-k_0 d_j} \delta^{-1}$, so
\begin{align}
\| A_{-k_0}(G_1,\dots,G_n) \|_{L^1(\mathbb{R}^n)} \lesssim \delta^c \prod_{i=1}^n \| G_i \|_{L^n(\mathbb{R}^n)}.
\end{align}
But now
\begin{align}
    &D_{2^{k_0-k}} \big( A_{-k_0}(G_1,\dots,G_n) \big)(x_1,\dots,x_n) \\
    &= 2^{-k_0} \int_0^{2^{k_0}} \prod_{i=1}^n G_i( 2^{d_1(k_0-k)} x_1, \dots,  2^{d_i(k_0 -k)}x_i + a_i t^{d_i},\dots, 2^{d_n(k_0-k)} x_n) \ dt \\
    & = 2^{-k} \int_0^{2^k} \prod_{i=1}^n G_i( 2^{d_1(k_0-k)} x_1, \dots,  2^{d_i(k_0 -k)}x_i + a_i 2^{d_i(k_0-k)} t^{d_i},\dots,  2^{d_n(k_0-k)} x_n) \\
    & = A_{-k} (D_{2^{k_0 -k}} G_1,\dots, D_{2^{k_0 -k}} G_n)(x) \\
    & = A_{-k} (F_1,\dots,F_n)(x),
\end{align}
so the result follows from changing variables.

With \eqref{e:sob00} in hand, we are able to prove Proposition \ref{p:sob1}.

\begin{proof}[The Proof of Proposition \ref{p:sob1}]
Set $g_i := \psi_{kd_i+s_i}*f_i$, let $\vec{1} := n^{-1/2}(1,\dots,1) \in \mathbb{R}^n$, and let $\varphi : \mathbb{R}^n \to \mathbb{C}$ be a bump function with compactly supported Fourier transform that is constant along $\mathbb{R} \vec{1}$. For each 
\[ 2^{- 100 D |k| - 100 s} \gg \epsilon > 0\] 
sufficiently small, see \eqref{e:D}, define
\begin{align}
    F_i(x_1,\dots,x_n) := \epsilon^{1-1/n} \varphi(\epsilon x) g_i(x \cdot \vec{1})
\end{align}
so that $\| F_i \|_{L^n(\mathbb{R}^n)} \approx \|g_i\|_{L^n(\mathbb{R})}$, and so that $\hat{F_i}$ is supported in an $O(\epsilon)$ neighborhood of
\begin{align}
    \{ \xi \vec{1} : \hat g_i(\xi) \ne 0 \};
\end{align}
in particular, for some $j$, $\hat{F_j}$ vanishes when $|\xi| \lesssim 2^{k d_j + s}$. So,
\begin{align}
   \| A_{k}(F_1,\dots,F_n) \|_{L^1(\mathbb{R}^n)} \lesssim 2^{-c s} \prod_{i=1}^n \| g_i \|_{L^n(\mathbb{R})}.
\end{align}
On the other hand
\begin{align}
    &\| A_{k}(F_1,\dots,F_n)(x_1,\dots,x_n) \|_{L^1(\mathbb{R}^n)} \\ &= \| 2^{k} \int_0^{2^{-k}} \big( \prod_{i=1}^n g_i(x \cdot \vec{1} - a_i t^{d_i}) \big) \cdot \big( \epsilon^{n-1} \prod_{i=1}^n \varphi(\epsilon x - \epsilon a_i t^{d_i} \vec{e_i}) \big) \ dt \|_{L^1(\mathbb{R}^n)} \\
    & = \int_{(\mathbb{R} \vec{1})^{\perp}} \int_{\mathbb{R}} |2^{k} \int_0^{2^{-k}} \big( \prod_{i=1}^n g_i(y-a_i t^{d_i}) \big) \big( \epsilon^{n-1} \prod_{i=1}^n \varphi(\epsilon z - \epsilon a_i t^{d_i} \vec{e_i} ) \big) \ dt| \ dy dz
\end{align}
using a change of variables (so in particular, $z_1,\dots,z_{n-1}$ form an orthogonal basis for $(\mathbb{R} \vec{1})^{\perp}$); above $\vec{e_i}$ is the $i$th coordinate vector.

By Taylor expansion
\begin{align}
    &\int_{(\mathbb{R} \vec{1})^{\perp}} \int_{\mathbb{R}} |2^{k} \int_0^{2^{-k}} \big( \prod_{i=1}^n g_i(y-a_i t^{d_i}) \big) \big( \epsilon^{n-1} \prod_{i=1}^n \varphi(\epsilon z - \epsilon a_i t^{d_i} \vec{e_i} ) \big) \ dt| \ dy dz \\
    &  = \| \varphi^n \|_{L^1((\mathbb{R} \vec{1})^{\perp})} \cdot \int_{\mathbb{R}^{n-1}} \int_{\mathbb{R}} |2^{k} \int_0^{2^{-k}}  \prod_{i=1}^n g_i(y-a_i t^{d_i})  \ dt| \ dy \\
    & + O( 2^{O(k)} \epsilon) \int_{\mathbb{R}^{n-1}} \Big(  \int_{\mathbb{R}} 2^{k} \int_0^{2^{-k}}  \prod_{i=1}^n |g_i(y-a_i t^{d_i})|  \ dt dy \Big) \epsilon^{n-1}( 1 + \epsilon|z|)^{-100} \ dz \\
    & = \| \varphi^n \|_{L^1((\mathbb{R} \vec{1})^{\perp})} \| B_{k}(g_1,\dots,g_n)(x) \|_{L^1(\mathbb{R})} + O_k(\epsilon \prod_{i=1}^n \| g_i \|_{L^n(\mathbb{R})});
\end{align}
the result follows from sending $\epsilon \downarrow 0$.
\end{proof}

The following proposition will be used to complement Proposition \ref{p:sob} via interpolation.

\begin{proposition}\label{p:shifted}
Suppose $n \geq 2$, $p_1,\dots,p_n > 1$ and $\frac{1}{p} = \sum_i \frac{1}{p_i} \leq 1$. Then
\begin{align}
    \| \sup_k |B_k(\psi_{k d_1+s_1}*f,\dots,\psi_{kd_n +s_n}*f)| \|_{L^p(\mathbb{R})} \lesssim s^n \prod_{i=1}^n \|f\|_{L^{p_i}(\mathbb{R})},
\end{align}
where $s := \max\{ s_i \}$.
\end{proposition}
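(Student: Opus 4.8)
The plan is to prove Proposition~\ref{p:shifted} by reducing the maximal estimate to a sum of single-scale operators, controlling each scale crudely, and exploiting the fact that the frequency supports force the relevant time-averages to be nearly pointwise bounded once the $\psi$-projections are in place. First I would observe that for fixed frequency parameters $s_1,\dots,s_n$, each piece $g_i := \psi_{kd_i+s_i}*f_i$ is frequency-localized to $|\xi| \approx 2^{kd_i+s_i}$, so $g_i$ is essentially constant on scale $2^{-(kd_i+s_i)}$. The quantity $B_k(g_1,\dots,g_n)(x)$ is an average over $t \in [0,1]$ of the product $\prod_i g_i(x - a_i 2^{-kd_i}t^{d_i})$; under the dilation $t \mapsto 2^{-kd_i}t^{d_i}$ the curve lives at the natural scale matched to the frequency localization, so each factor should be comparable to a standard maximally-truncated average acting on $f_i$.

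The key step is to dominate the single-scale maximal function $\sup_k|B_k(g_1,\dots,g_n)|$ by a product of one-dimensional Hardy–Littlewood-type maximal operators, one for each factor, up to a polynomial loss in $s$. Concretely, I would write $B_k(g_1,\dots,g_n)(x) = \int_0^1 \prod_i (\psi_{kd_i+s_i}*f_i)(x-\gamma_i(2^{-k}t))\,dt$ and, for each $i$, use that the smoothed factor $\psi_{kd_i+s_i}*f_i$ is pointwise bounded by $M f_i$, the Hardy–Littlewood maximal function, with a constant that grows at most linearly in $s_i$ because shifting the cutoff by $s_i$ costs a factor of $s_i$ (this is the standard ``shifted maximal function'' loss; each dyadic shift of the convolution kernel by one unit contributes an $O(1)$ tail, and summing over the $O(s)$ scales separating $kd_i$ from $kd_i+s_i$ yields the linear factor). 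Taking the product over the $n$ factors produces the $s^n$ growth. Then
\begin{align}
    \sup_k |B_k(g_1,\dots,g_n)(x)| \lesssim s^n \prod_{i=1}^n M f_i(x),
\end{align}
uniformly in $k$, where the supremum is absorbed because each $Mf_i$ already dominates every truncated average.

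With this pointwise bound, the proof closes by Hölder's inequality and the $L^{p_i}$-boundedness of the Hardy–Littlewood maximal operator: since each $p_i > 1$, we have $\|Mf_i\|_{L^{p_i}} \lesssim \|f_i\|_{L^{p_i}}$, and applying Hölder with exponents $p_1,\dots,p_n$ (whose reciprocals sum to $1/p \le 1$) gives
\begin{align}
    \big\| \sup_k |B_k(g_1,\dots,g_n)| \big\|_{L^p(\mathbb{R})} \lesssim s^n \prod_{i=1}^n \|Mf_i\|_{L^{p_i}(\mathbb{R})} \lesssim s^n \prod_{i=1}^n \|f_i\|_{L^{p_i}(\mathbb{R})},
\end{align}
which is exactly the claimed estimate. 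The main obstacle I anticipate is making the shifted-maximal-function domination fully rigorous while tracking the dependence on $s$: one must verify that convolving $f_i$ with $\psi_{kd_i+s_i}$ and then evaluating along the homogeneous curve at the matched scale is genuinely controlled by $Mf_i$ with only polynomial loss in $s_i$, uniformly in $k$ and in the curve coefficients $a_i = O(1)$. This requires a careful kernel estimate on $\psi_{kd_i+s_i}$ and its interaction with the time-average, but it involves no oscillation or cancellation---the crude absolute-value bound suffices, which is why only a polynomial factor $s^n$ (rather than the sharp $2^{-cs}$ decay of Proposition~\ref{p:sob1}) is lost here; the decaying estimate and this lossy-but-robust estimate are then designed to be interpolated.
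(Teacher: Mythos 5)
Your proposal has a genuine gap: the claimed pointwise bound
\begin{align}
    \sup_k |B_k(\psi_{kd_1+s_1}*f_1,\dots,\psi_{kd_n+s_n}*f_n)(x)| \lesssim s^n \prod_{i=1}^n Mf_i(x)
\end{align}
is false. Inside $B_k$ the factor $\psi_{kd_i+s_i}*f_i$ is evaluated at $x - a_i 2^{-kd_i}t^{d_i}$: the kernel has width $2^{-(kd_i+s_i)}$, but the evaluation point sits at distance up to $|a_i|2^{-kd_i}$ from $x$, i.e.\ at $\approx 2^{s_i}$ kernel widths. To dominate such a displaced average by averages centered at $x$ you must inflate the interval to radius $\approx 2^{-kd_i}$, and the cost is the ratio of the two scales, which is $\approx 2^{s_i}$ --- exponential, not linear, in $s_i$. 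Concretely, if $f_i$ is a bump of width $2^{-(kd_i+s_i)}$ centered at $x - a_i 2^{-kd_i}t^{d_i}$, then the smoothed factor at that point is $\approx 2^{kd_i+s_i}\|f_i\|_{L^1}$, while $Mf_i(x) \approx 2^{kd_i}\|f_i\|_{L^1}$, so any pointwise domination constant is forced to be $\gtrsim 2^{s_i}$. Your heuristic (``each unit shift costs $O(1)$, so $O(s)$ shifts cost $O(s)$'') conflates two different statements: the logarithmic-in-shift loss is a theorem about the \emph{operator norm} of the shifted maximal operator, proved by Calder\'on--Zygmund theory; it is not a pointwise domination by the Hardy--Littlewood maximal function. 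The distinction matters quantitatively: with the true exponential loss $2^{O(ns)}$, interpolation against the $2^{-cs}$ smoothing bound of Proposition \ref{p:sob1} would give summable decay only for exponents near $p_1 = \dots = p_n = n$, not in the full range $p_i > 1$, $\sum_i 1/p_i \le 1$ claimed in the statement and needed for Theorem \ref{t:main}.

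The repair is the route the paper takes. Keep the displacement inside the maximal operator: setting $t' := 2^{s_i} a_i t^{d_i}$, one has the pointwise bound $|\psi_{kd_i+s_i}*f_i(x - a_i 2^{-kd_i}t^{d_i})| \le M^{t'}f_i(x)$, where $M^{t'}f(x) := \sup_j \varphi_j * |f|(x - 2^{-j}t')$ is the \emph{shifted} maximal function (take $j = kd_i+s_i$; the supremum over $j$ absorbs the supremum over $k$, uniformly in $t \in [0,1]$). One then proves --- this is Lemma \ref{l:shifted}, via vector-valued Calder\'on--Zygmund theory --- that $M^{t'}$ maps $L^1 \to L^{1,\infty}$ with norm $\lesssim \log(2+|t'|) \lesssim 1+s_i$; interpolating with the trivial $L^\infty$ bound gives $L^{p_i}$ operator norms $\lesssim_{p_i} 1+s$. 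After that, your closing display is correct as written: H\"older in $x$ followed by integration in $t \in [0,1]$ yields the factor $s^n$ and the claimed estimate. In other words, the ``careful kernel estimate'' your last paragraph anticipates is not optional bookkeeping; it is the actual content of the proposition, and it lives at the level of weak-type operator bounds rather than pointwise ones.
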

\begin{remark}\label{r:subL1}
    In the non-Banach range, when $p < 1$, we accrue additional multiplicative factors of $2^{s(1/p-1)}$, which can be interpolated in a small neighborhood of $L^1$; we leave the details to the interested reader.
\end{remark}

We need the following lemma.

\begin{lemma}\label{l:shifted}
    Suppose that $\varphi \geq 0$ is a Schwartz function. For $t \in \mathbb{R}$, consider the maximal function
    \begin{align}
        M^tf(x):= \sup_j \varphi_j*|f|(x-2^{-j} t),
    \end{align}
    There exists an absolute constant $0 < C < \infty$ (independent of $t$) so that 
    \begin{align}
        \| M^t f \|_{L^{1,\infty}(\mathbb{R})} \leq C \log(2 + |t|) \| f \|_{L^1(\mathbb{R})}.
    \end{align}
\end{lemma}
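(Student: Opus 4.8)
The plan is to reduce the lemma to a weak-type bound for the dyadic \emph{shifted Hardy--Littlewood maximal function} and then to prove that bound with the stated logarithmic dependence on the shift. First I would rewrite $M^tf$ as a maximal approximate identity whose bump has been recentered at $t$: writing out $\varphi_j*|f|(x-2^{-j}t)=\int 2^j\varphi(2^j(x-y)-t)|f(y)|\,dy$ and substituting $y=x-2^{-j}v$ gives $M^tf(x)=\sup_{j}\int\varphi(v-t)|f(x-2^{-j}v)|\,dv$. Since $\varphi\ge 0$ is Schwartz I would dominate the recentered kernel by dyadic annular averages around $t$, $\varphi(v-t)\lesssim_K\sum_{m\ge 0}2^{-(K-1)m}\,2^{-m}\mathbf 1_{\{|v-t|\le 2^m\}}$, and observe that the $m$-th piece, after the change of variables $w=2^{-j}v$, is exactly an average of $|f|$ over the ball $B(x-2^{-j}t,\,2^{m-j})$. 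Taking the supremum in $j$, this is $M^{(s_m)}f(x)$, the dyadic shifted maximal function $M^{(s)}f(x):=\sup_{j\in\Z}\tfrac{1}{2\cdot 2^{-j}}\int_{|y-(x-2^{-j}s)|\le 2^{-j}}|f|$ with shift $s_m=2^{-m}|t|\le|t|$. Thus $M^tf\lesssim_K\sum_{m\ge 0}2^{-(K-1)m}M^{(s_m)}f$, and once I have $\|M^{(s)}\|_{L^1\to L^{1,\infty}}\lesssim\log(2+|s|)$, summing the geometric series (distributing the height $\lambda$ across $m$ with summable weights, as is permitted for weak-$L^1$) yields $\|M^tf\|_{1,\infty}\lesssim\log(2+|t|)\|f\|_1$.

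It therefore remains to prove the shifted maximal theorem $\|M^{(s)}\|_{L^1\to L^{1,\infty}}\lesssim\log(2+|s|)$. The outer step is a residue-class decomposition of the scales: set $N:=\lceil\log_2(4+2|s|)\rceil\lesssim\log(2+|s|)$ and, for $a\in\{0,\dots,N-1\}$, let $M^{(s)}_a$ denote the supremum over $j\equiv a\ (\mathrm{mod}\ N)$. Then $M^{(s)}f=\max_a M^{(s)}_af$, so by subadditivity of Lebesgue measure $|\{M^{(s)}f>\lambda\}|\le\sum_a|\{M^{(s)}_af>\lambda\}|$, and it suffices to bound each class uniformly: $|\{M^{(s)}_af>\lambda\}|\lesssim\lambda^{-1}\|f\|_{L^1}$ with an absolute constant, since summing over the $N$ classes produces the logarithmic factor. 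The point of the grouping is that within a single class any two distinct scales $2^{-j}$ differ by a factor $\ge 2^N\ge 2|s|$.

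The per-class estimate is the main obstacle, and its resolution is the crux of the whole argument. For $x$ in the exceptional set $E:=\{M^{(s)}_af>\lambda\}$, pick a witnessing interval $J_x$ of half-width $\rho_x$ (a scale in class $a$) centered at $m_x:=x-s\rho_x$, so that $\int_{J_x}|f|>\lambda|J_x|$; note that $x=m_x+s\rho_x$ sits a full $|s|$ half-widths from the center of $J_x$, which is precisely why a naive covering of $E$ by dilates of the $J_x$ loses a factor $|s|$ rather than $\log$. Instead I would apply the Vitali $5r$-covering lemma to the witnessing intervals \emph{themselves}, extracting a disjoint subfamily $\{J_{x_i}\}$ with $\bigcup_x J_x\subseteq\bigcup_i 5J_{x_i}$ and with each selected scale at least as large as the scales it absorbs; disjointness gives $\sum_i|J_{x_i}|\le\lambda^{-1}\|f\|_{L^1}$. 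The decisive observation is that when $x$ is assigned to a selected interval of \emph{strictly larger} scale, the class gap forces $\rho_x\le\rho_{x_i}/(2|s|)$, so the shift discrepancy $s\rho_x\le\tfrac12\rho_{x_i}$ is negligible; and when the scales are \emph{equal} there is no discrepancy at all. In either case $x$ lands in an $O(1)$-dilate of $J_{x_i}$ or of its shifted copy centered at $x_i+s\rho_{x_i}$, so $E$ is covered by intervals of total length $\lesssim\sum_i|J_{x_i}|\lesssim\lambda^{-1}\|f\|_{L^1}$, as required. The essential mechanism to highlight is that choosing the class gap to exceed the shift converts the shifted averages at well-separated scales into an essentially scale-local family, defeating the otherwise linear-in-$|s|$ loss.
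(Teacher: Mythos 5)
Your proof is correct, but it follows a genuinely different route from the paper's. The paper treats $f\mapsto(\varphi_j*|f|(\cdot-2^{-j}t))_j$ as an $\ell^\infty$-valued Calder\'{o}n--Zygmund operator: since $M^t$ is trivially bounded on $L^\infty$, vector-valued Calder\'{o}n--Zygmund theory (the paper cites Stein) reduces the weak $(1,1)$ bound to the H\"{o}rmander-type condition
\[
\int_{|x|\ge 10|y|}\sum_j\bigl|\varphi_j(x-2^{-j}t-y)-\varphi_j(x-2^{-j}t)\bigr|\,dx\;\lesssim\;\log|t|,
\]
which is verified after normalizing $|y|\approx 1$ by splitting the scales into three regimes: $2^j\le 100^{-1}$ (mean value theorem), $100^{-1}\le 2^j\lesssim|t|$ (single-scale estimates, and these $\approx\log|t|$ intermediate scales are the source of the logarithm), and $2^j\gg|t|$ (Schwartz decay). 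You instead dominate $M^t$ pointwise by shifted dyadic Hardy--Littlewood maximal functions $M^{(s_m)}$ with $|s_m|\le|t|$, and prove $\|M^{(s)}\|_{L^1\to L^{1,\infty}}\lesssim\log(2+|s|)$ by splitting the scales into $N\approx\log(2+|s|)$ residue classes and running a Vitali covering argument per class, where the scale separation $2^N\ge 2|s|$ within a class renders the shift discrepancy negligible (and it cancels exactly at equal scales). Both proofs extract the logarithm from the same phenomenon --- only about $\log|t|$ scales interact nontrivially with the shift --- but yours is elementary and self-contained, avoiding the vector-valued machinery and the external reference, at the cost of length; the paper's is a three-line reduction to a standard theorem, and its kernel-regularity computation does not need $\varphi\ge 0$. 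One small slip to correct in your write-up: in the equal-scale case of the covering step, the recentered copy of $J_{x_i}$ that captures $x$ is centered at $m_{x_i}+s\rho_{x_i}=x_i$, not at $x_i+s\rho_{x_i}$; indeed your own computation gives $x-x_i=m_x-m_{x_i}$, hence $x\in B(x_i,2\rho_{x_i})$, and the measure count $\lesssim\sum_i|J_{x_i}|\le\lambda^{-1}\|f\|_{L^1}$ is unaffected.
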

\begin{proof}
Since the maximal function is trivially bounded on $L^{\infty}(\mathbb{R})$
we may use  vector-valued Calder\'{o}n-Zygmund theory, see e.g.\ \cite[\S 1]{FatS}. In particular, it suffices to show that
    \begin{align}
        \int_{|x| \geq 10 |y|} \sum_j |\varphi_j(x-2^{-j} t -y) - \varphi_j(x-2^{-j} t)| \lesssim \log |t|.
    \end{align}
    By dilation invariance we can normalize $|y| \approx 1$. When $2^{j} \leq 100^{-1}$ we use the mean-value theorem; when $100^{-1} \leq 2^j \lesssim |t|$ we use a single-scale estimate; and when $2^{j} \gg |t|$ we use the decay of $\varphi$.
\end{proof}

\begin{proof}[Proof of Proposition \ref{p:shifted}]
We have the pointwise bound
    \begin{align}
        &|B_k(\psi_{kd_1+s_1}*f_1,\dots,\psi_{kd_n+s_n}*f_n)(x)| \\
        &  \qquad = \big| \int_0^1 \big( \int_{\mathbb{R}^n} \prod_{i=1}^n \psi_{kd_i+s_i}(x-a_i 2^{-kd_i} t^{d_i} - u_i) f_i(u_i) \ du \big) \ dt \big| \\
        & \qquad \qquad \leq \int_0^1  \prod_{i=1}^n M^{2^{s_i} a_i t^{d_i}} f_i(x)  \ dt
    \end{align}
    So
    \begin{align}
        \| \sup_k | B_k(\psi_{kd_1+s_1}*f_1,\dots,\psi_{kd_n+s_n}*f_n)| \|_{L^p(\mathbb{R})} &\lesssim \int_0^1 \| \prod_{i=1}^n M^{2^{s_i} a_i t^{d_i}} f_i \|_{L^p(\mathbb{R})} \ dt \\
        & \lesssim s^{n} \prod_{i=1}^n \| f_i\|_{L^{p_i}(\mathbb{R})},
    \end{align}
    as desired.
\end{proof}

With Propositions \ref{p:sob1} and \ref{p:shifted} in hand, we can quickly establish Theorem \ref{t:main}.

\section{The Proof of Theorem \ref{t:main}}

The proof is by induction. Thus, we will assume that for all $m < n$
\begin{align}
    \| \sup_k |B_k(f_1,\dots,f_m)| \|_{L^p(\mathbb{R})} \lesssim \prod_{i=1}^m \| f_i \|_{L^{p_i}(\mathbb{R})},
\end{align}
whenever $\vec{\gamma}$ is a homogeneous curve, $p_1,\dots,p_m > 1$, and $\frac{1}{p_1} + \dots + \frac{1}{p_m} = \frac{1}{p} \leq 1$, with the $n=1$ case following from Hardy-Littlewood and convexity.

We first note that by Taylor expansion, for each $k$, we may decompose
\begin{align}\label{e:Tayexp}
    B_k(f_1,\dots,f_n) &= \sum_{j \geq 0} \frac{1}{j!} \big( (\partial^j \varphi)_{kd_n}*f_n\big) B_{k,\neq n}^{(j)}(f_1,\dots,f_{n-1}) \\
    & \qquad + B_k(f_1,\dots,f_{n-1},f_n - \varphi_{kd_n}*f_n) 
\end{align}
where
\begin{align}
    B_{k,\neq n}^{(j)}(f_1,\dots,f_{n-1})(x) := 2^k \int_{0}^{2^{-k}} \prod_{i=1}^{n-1} f_i(x - a_i t^{d_i}) \ (- a_n 2^{k d_n} {t^{d_n}})^j \ dt,
\end{align}
and $\partial^j \varphi$ satisfies all of the same Schwartz normalizations as $\varphi$ up to factors of $C^j, \ C = O(1)$. 

Thus, by iterating \eqref{e:Tayexp}, induction and convexity, it suffices to prove that
\begin{align}
    \| \sup_k |B_k(f_1-\varphi_{kd_1}*f_1,\dots,f_n - \varphi_{k d_n}*f_n)| \|_{L^p(\mathbb{R})} \lesssim \prod_{i=1}^n \| f_i \|_{L^{p_i}(\mathbb{R})}
\end{align}
in the above range of $p, p_i$.

Define
\begin{align}
    B_{k,s}(f_1,\dots,f_n) := \sum_{s_i \geq 0, \ \max\{s_i\} = s} B_k(\psi_{kd_1+s_1}*f_1,\dots,\psi_{kd_n+s_n}*f_n).
\end{align}
Then, by Proposition \ref{p:sob}, we may bound
\begin{align}
   \| \sup_k |B_{k,s}(f_1,\dots,f_n)| \|_{L^1(\mathbb{R})} &\leq \sum_k \| B_{k,s}(f_1,\dots,f_n) \|_{L^1(\mathbb{R})} \\
   &\leq 2^{-cs} \sum_{0 \leq s_i \leq s} \sum_k \prod_{i=1}^n \| \psi_{k d_i + s_i} *f_i \|_{L^n(\mathbb{R})} \\
   & \leq 2^{-cs} \sum_{s_i \leq s} \prod_{i=1}^n \big( \sum_{k} \| \psi_{k d_i+s_i} * f_i \|_{L^n(\mathbb{R})}^n \big)^{1/n} \\
   & = 2^{-cs} \sum_{s_i \leq s} \prod_{i=1}^n \| \big( \sum_{k} |\psi_{k d_i+s_i} * f_i|^n \big)^{1/n} \|_{L^n(\mathbb{R})}  \\
   & \leq 2^{-cs} \sum_{s_i \leq s} \prod_{i=1}^n \| S f_i \|_{L^n(\mathbb{R})}
\end{align}
where $Sf$ is the Littlewood-Paley square function,
\begin{align}
Sf := \big( \sum_k |\psi_k*f|^2 \big)^{1/2},
\end{align}
which is bounded on $L^p(\mathbb{R}), \ 1 < p < \infty$. So
\begin{align}\label{e:sum}
    \sum_k \| B_{k,s}(f_1,\dots,f_n) \|_{L^1(\mathbb{R})} \lesssim_n s^n 2^{-cs} \prod_{i=1}^n \| f_i \|_{L^n(\mathbb{R})},
\end{align}
and by interpolating with Proposition \ref{p:shifted}, we see that whenever $p_1,\dots,p_n > 1, \ \frac{1}{p} = \sum_i \frac{1}{p_i} \leq 1$, there exists an absolute $c = c_{p_1,\dots,p_n;p} > 0$ so that
\begin{align}
    \| \sup_k |B_{k,s}(f_1,\dots,f_n)| \|_{L^p(\mathbb{R})} \lesssim s^n 2^{-c s} \prod_{i=1}^n \| f_{i} \|_{L^{p_i}(\mathbb{R})}.
\end{align}
A final sum over $s \geq 1$ completes the proof.


\begin{thebibliography}{9}

\bibitem{B}
J. Bourgain. \emph{A nonlinear version of Roth's theorem for sets of positive density in the real line.}
J. Analyse Math. 50 (1988), 169–181.




\bibitem{D}
C. Demeter, T. Tao, C. Thiele. 
\emph{Maximal multilinear operators.}
Trans. Amer. Math. Soc. 360 (2008), no. 9, 4989–5042.

\bibitem{DGR}
P. Durcik, S. Guo, J. Roos.
\emph{A polynomial Roth theorem on the real line.}
Trans. Amer. Math. Soc. 371 (2019), no. 10, 6973–6993.

\bibitem{D+}
P. Durcik, J. Roos.
\emph{A new proof of an inequality of Bourgain.}
Math. Res. Lett. 31 (2024), no. 4, 1047–1060.

\bibitem{Lie}
A. Gaitan, V. Lie.
\emph{The boundedness of the (sub)bilinear maximal function along ``non-flat'' smooth curves.}
J. Fourier Anal. Appl. 26 (2020), no. 4, Paper No. 69, 33 pp.


\bibitem{Hong}
G. Hong. \emph{Polynomial Szemer\'{e}di for sets with large Hausdorff dimension on the torus}. arxiv, https://arxiv.org/pdf/2507.14407


\bibitem{HL}
B. Hu, V. Lie.
\emph{On the Curved Trilinear Hilbert Transform}.
https://arxiv.org/pdf/2308.10706


\bibitem{Kosz}
D. Kosz, M. Mirek, S. Peluse, J. Wright. \emph{The multilinear circle method and a question of Bergelson}. arxiv, https://arxiv.org/abs/2411.09478

\bibitem{K}
B. Krause. \emph{A Non-Linear Roth Theorem for Fractals of Sufficiently Large Dimension}. Preprint, https://arxiv.org/abs/1904.10562.


\bibitem{K0}
B. Krause. \emph{On Polynomial Progressions Inside Sets of Large Dimension}. arxiv, https://arxiv.org/abs/2508.04680



\bibitem{LAC}
M. Lacey.
\emph{The bilinear maximal functions map into $L^p$ for $2/3< p\leq 1$.} Ann. of Math. (2) 151 (2000), no. 1, 35–57.

\bibitem{LT1}
 M. Lacey, C. Thiele, \emph{$L^p$ estimates on the bilinear Hilbert transform for $2 <p< \infty$.} Ann. of Math. (2)
146 (1997), no. 3, 693–724.

\bibitem{LT2}
 M. Lacey, C. Thiele, \emph{On Calder\'{o}n’s conjecture.} Ann. of Math. (2) 149 (1999), no. 2, 475–496

\bibitem{Li0}
X. Li.
\emph{Bilinear Hilbert transforms along curves, I.} Analysis $\&$ PDE Vol. 6 (2013), No. 1, 197-220.


\bibitem{Li}
X. Li. L. Xiao. \emph{Uniform estimates for bilinear Hilbert transforms and bilinear maximal functions associated to polynomials.}
Amer. J. Math. 138 (2016), no. 4, 907–962.

\bibitem{Lie0}
V. Lie. 
\emph{On the boundedness of the Bilinear Hilbert transform along “non-flat” smooth curves.} American Journal of
Mathematics, 137(2):313–363, 2015.


\bibitem{FatS}
E. Stein.
\emph{Harmonic analysis: real-variable methods, orthogonality, and oscillatory integrals.}
Princeton Mathematical Series, 43. Monographs in Harmonic Analysis, III. Princeton University
Press, Princeton, NJ, 1993.





\end{thebibliography}
\end{document}